\numberwithin{equation}{section}
\newtheorem{theorem}{Theorem}[section]
\newtheorem{corollary}[theorem]{Corollary}
\newtheorem{lemma}[theorem]{Lemma}
\newtheorem{proposition}[theorem]{Proposition}
\newtheorem{problem}[theorem]{Problem}
\newtheorem*{theorem*}{Theorem}
\theoremstyle{definition}
\newtheorem{definition}[theorem]{Definition}
\theoremstyle{remark}
\newtheorem{remark}[theorem]{Remark}
\newcommand{\F}{\mathbb{F}}
\newcommand{\N}{\mathbb{N}}
\newcommand{\Q}{\mathbb{Q}}
\newcommand{\Z}{\mathbb{Z}}
\newcommand{\prof}[1]{\mathbf{#1}}
\DeclareMathOperator{\LERF}{LERF}
\DeclareMathOperator{\ord}{ord}
\DeclareMathOperator{\PSL}{PSL}
\DeclareMathOperator{\T}{T}
\newcommand{\defeq}{\mathrel{\mathop{:}}=}
\renewcommand{\epsilon}{\varepsilon}
\title[Hereditarily just-infinite torsion groups]{Hereditarily just-infinite torsion groups with positive first $\ell^2$-Betti number}
\author[S. Kionke]{Steffen Kionke}
\author[E. Schesler]{Eduard Schesler}
\address{FernUniversit\"at in Hagen \\ Fakult\"at f\"ur Mathematik und Informatik \\
58084 Hagen}
\email{steffen.kionke@fernuni-hagen.de}
\email{eduard.schesler@fernuni-hagen.de}
\thanks{Funded by the Deutsche Forschungsgemeinschaft (DFG, German Research Foundation) - 441848266}
\subjclass[2010]{Primary 20E26; Secondary 20E07, 20E18, 20C05}
\keywords{just-infinite groups, torsion groups, subgroup growth, $\ell^2$-Betti numbers}
\begin{document}
\begin{abstract}
We present a new method to construct finitely generated, residually finite, infinite torsion groups. In contrast to known constructions, a profinite perspective enables us to control finite quotients and normal subgroups of these torsion groups. As an application, we describe the first examples of residually finite, hereditarily just-infinite groups with positive first $\ell^2$-Betti-number. In addition, we show that these groups have polynomial normal subgroup growth, which answers a question of Barnea and Schlage-Puchta.
\end{abstract}
\maketitle

\section{Introduction}
An infinite group $G$ is called \emph{just-infinite} if all of its proper quotients are finite.
Since its introduction by McCarthy~\cite{McCarthy68} in the late 1960's, the class of just-infinite groups remained an active field of research.
One reason for the importance of just-infinite groups is that, by Zorn's lemma, every finitely generated, infinite group admits a just-infinite quotient.

By a celebrated result of Wilson~\cite{Wilson71}, the study of just-infinite groups can be reduced to the study of  simple groups, branch groups and residually finite \emph{hereditarily just-infinite groups}, i.e.\ groups all of whose finite index subgroups are just-infinite.
Simple groups received a lot of attention and today a large diversity of simple groups with various properties is available; see e.g.~\cite{HydeLodha19,JuschenkoMonod13,OsinThom13,SkipperWitzelZaremsky19} for prominent examples.
Similarly, the class of  branch groups was intensively studied since Grigorchuk's discovery of the first example~\cite{Grigorchuk80} and a good understanding of numerous families of branch groups was developed; see~\cite{Grigorchuk00,BGS-branch} and references therein.
In contrast, the class of residually finite hereditarily just-infinite groups remained mysterious and, apart from some virtually cyclic groups, only two families of examples are known:
\begin{enumerate}
\item Centerless irreducible lattices in higher rank semisimple Lie groups -- for instance, $\PSL_n(\Z)$ for $n \geq 3$ -- and their $S$-arithmetic generalizations~\cite[Chapter IV]{Margulis91}.
\item The infinite $p$-torsion groups constructed by Ershov and Jaikin-Zapirain~\cite{ErshovJaikin13}. These groups are not linear and are locally zero-one, i.e.,
all of their finitely generated subgroups are either finite or of finite index.
\end{enumerate}
In the category of profinite groups more hereditarily just-infinite examples\footnote{A profinite group $G$ is understood to be (hereditarily) just-infinite, if each non-trivial closed normal subgroup of (a finite index subgroup of) $G$ has finite index in $G$.} are available; most prominently, the Nottingham groups~\cite{Johnson88} and some of their closed subgroups~\cite{Fesenko99}.
Interestingly, all of the just-mentioned examples of hereditarily just-infinite groups are virtually  residually-$p$ for some prime number $p$.
Wilson~\cite{Wilson10} asked whether this is always the case and
answered his question negatively in the profinite case (see also \cite{Vannacci16} for more examples).

Here we construct new finitely generated, hereditarily just-infinite torsion groups that answer Wilson's question negatively in the discrete case.
To formulate our main result, we need the notion of $\Pi$-graded groups. Let $\Pi = (p_i)_{i \in \N}$ be a sequence of prime numbers.
A group $G$ is called \emph{$\Pi$-graded} if it admits a residual chain, i.e.\ a chain of finite index normal subgroups $G = G_0 \trianglerighteq G_1 \trianglerighteq G_2 \ldots$ with $\bigcap_{i \in \N_0} G_i = \{1\}$, such that $G_{i-1} / G_i$ is an elementary abelian $p_i$-group. Every $\Pi$-graded group is residually finite.

\begin{theorem*}[Theorem~\ref{thm:main}]
Let $\Pi =(p_i)_{i\in \N}$ be a sequence of pairwise distinct primes, let $d \geq 2$ be a natural number, and let $\varepsilon > 0$.
There is a $d$-generated, hereditarily just-infinite, $\Pi$-graded torsion group $\Gamma$
with $b_1^{(2)}(\Gamma) > d-1-\varepsilon$.
\end{theorem*}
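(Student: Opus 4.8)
The plan is to realize $\Gamma$ as a quotient $F_d/N$ of the free group $F_d$ on $x_1,\dots,x_d$, where $N=\bigcap_{i\ge 0}N_i$ is the intersection of a descending chain of finite-index normal subgroups $F_d=N_0\trianglerighteq N_1\trianglerighteq\cdots$ that we build recursively, with each $N_{i-1}/N_i$ elementary abelian $p_i$; the images $\Gamma_i:=N_i/N$ then form a residual chain witnessing the $\Pi$-grading. At step $i$ we must choose, among all subgroups $N_i\le N_{i-1}$ with $N_{i-1}/N_i$ elementary abelian $p_i$ --- equivalently, among all subspaces of $H_1(N_{i-1};\F_{p_i})$ to factor out --- one that simultaneously advances a fixed bookkeeping of several tasks. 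Using a single enumeration of (a) the elements of $F_d$, (b) the finite-index subgroups of the finite groups $F_d/N_i$ together with their nontrivial normal subgroups, and (c) a budget of ``moves'', we interleave these tasks so that in the limit the three conditions --- torsion, hereditary just-infiniteness, and a lower bound on $b_1^{(2)}$ --- all hold, while $|F_d/N_i|\to\infty$ keeps $\Gamma$ infinite.

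For torsion: having enumerated $F_d=\{w_1,w_2,\dots\}$, when $w_k$ comes up we pick the step-$i$ quotient so that a suitable power $w_k^{n_k}$ lands in $N_i$, and we maintain the invariant that $w_k^{n_k}\in N_j$ for all later $j$ as well (this only constrains the quotients we take at those steps); then $w_k^{n_k}\in N$, so $w_k$ has finite order in $\Gamma$. For hereditary just-infiniteness the profinite perspective developed in the earlier sections does the work: rather than arguing directly in $\Gamma$, one controls the closure $\widehat\Gamma=\varprojlim F_d/N_i$, forcing every nontrivial closed normal subgroup of every open subgroup to be open, a condition that can be checked level by level along the chain because at each level we know exactly the finite quotient and its normal subgroup lattice; the (hereditary) just-infiniteness of the discrete group $\Gamma$ then follows from that of $\widehat\Gamma$ via the correspondence between normal subgroups of $\Gamma$ and closed normal subgroups of $\widehat\Gamma$ guaranteed by the grading. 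Carrying out tasks (a) and (b) both requires, infinitely often, passing to a \emph{proper} subspace of $H_1(N_{i-1};\F_{p_i})$, i.e.\ imposing genuinely new relations; the point of the bookkeeping is that only finitely many such interventions are needed below any given level.

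The first $\ell^2$-Betti number is controlled through the presentation complex. Choosing a set $R$ of normal generators of $N\trianglelefteq F_d$ and using that the von Neumann dimension is continuous along increasing unions of closed submodules, one has
\[
b_1^{(2)}(\Gamma)=d-1-\dim_{\mathcal N(\Gamma)}\overline{\operatorname{im}\bigl(\partial_2\colon\ell^2(\Gamma)^{(R)}\to\ell^2(\Gamma)^d\bigr)},
\]
where $\partial_2$ sends a relation $r$ to the vector $\bigl(\overline{\partial r/\partial x_1},\dots,\overline{\partial r/\partial x_d}\bigr)$ of reduced Fox derivatives; since $\Gamma$ is infinite, the right-hand side is well defined even though $\Gamma$ is not finitely presented. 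It therefore suffices to arrange that the relations we are ever forced to impose --- the finitely-many-per-level interventions from the previous paragraph --- contribute von Neumann dimensions forming a convergent series of sum $<\varepsilon$. This is where the chain structure is exploited: a relation introduced at level $i$ is a word lying in $N_i$, so its Fox-derivative vector, read in $\ell^2(\Gamma)$, is supported on preimages of few cosets of $\Gamma_i$, and --- crucially using that $\Gamma$ is a torsion group, so that $\mathcal N(\Gamma)$ has many zero divisors --- the submodule it generates can be made to have von Neumann dimension as small as we like by taking $[\Gamma:\Gamma_i]$ large before the relation is invoked. Budgeting these dimensions against $\varepsilon/2^k$ and leaving a little slack yields $b_1^{(2)}(\Gamma)>d-1-\varepsilon$; an equivalent bookkeeping phrases this as a lower bound for $b_1^{(2)}(\Gamma)$ in terms of the normalized growth $\dim_{\F_{p_{i+1}}}H_1(\Gamma_i;\F_{p_{i+1}})/[\Gamma:\Gamma_i]$ of mod-$p$ first homology along the chain, which the construction keeps within $\varepsilon$ of $d-1$.

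The hard part, and the place where the three tasks genuinely compete, is calibrating the bookkeeping: every demand of hereditary just-infiniteness and of torsion spends part of the $\ell^2$-budget and shrinks the next index step, so one must show that infinitely many such demands can always be satisfied with total cost below $\varepsilon$, without ever being forced to collapse $\Gamma$ to a finite group and while still realizing every prescribed elementary abelian $p_i$-layer. Quantifying the $\ell^2$-cost of a single relation in a non-finitely-presented torsion group --- making rigorous the claim that a relation sitting deep in the chain is cheap --- is the key technical estimate, and is presumably where the profinite control of finite quotients and normal subgroups advertised in the abstract is indispensable.
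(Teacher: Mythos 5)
Your overall architecture matches the paper's: an inverse system of finite groups with elementary abelian $p_i$-layers, an enumeration of words to force torsion, profinite control of normal subgroups, and a homology-gradient bound for $b_1^{(2)}$. However, there are two genuine gaps. First, your primary route to the $\ell^2$-bound --- budgeting the von Neumann dimension of each imposed relation against $\varepsilon/2^k$ and claiming that ``a relation sitting deep in the chain is cheap'' because $\mathcal{N}(\Gamma)$ has many zero divisors --- does not work, and the paper explicitly notes that the L\"uck--Osin technique fails here. A cyclic submodule of $\ell^2(\Gamma)^d$ generated by the Fox-derivative vector of a relation $r\in N_i$ has von Neumann dimension that cannot be bounded by anything like $1/[\Gamma:\Gamma_i]$ merely because $r$ lies deep in the chain; the L\"uck--Osin estimate is specific to relations of the form $w^n$ with $w$ of order $n$ in the quotient. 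Moreover, the relations imposed at each level are not ``finitely many interventions'': to kill the invariants needed for just-infiniteness one quotients the Gasch\"utz relation module $V_{T,p}\cong\F_p[G]^{d-1}\oplus 1\!1_G$ by large submodules, and controlling their total size is the delicate part. The correct route is the one you mention only in passing as ``equivalent bookkeeping'': the inequality $b_1^{(2)}(\Gamma)\geq\limsup_i b_1(N_i,\F_{p_i})/[\Gamma:N_i]$, which is a nontrivial statement (Proposition~\ref{prop:lowerboundL2}) requiring a finitely presented cover, the bound $|T_i|\leq c^{[\Gamma:M_i]}$ on homology torsion, and --- crucially --- that the primes $p_i$ tend to infinity (which is where ``pairwise distinct'' enters); your argument never uses this hypothesis, yet the theorem is false without some such condition on $\Pi$ in this approach.

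Second, your treatment of hereditary just-infiniteness proposes ``forcing every nontrivial closed normal subgroup of every open subgroup to be open, a condition that can be checked level by level.'' Controlling normal subgroups of all open subgroups level by level is not feasible with the tools at hand and is not what the paper does: the construction only controls normal subgroups of $\prof{G}_\infty$ itself (via the structure of normal subgroups of $V\rtimes G$ and the arrangement $V_{k+1}^{H_j}=\{0\}$ for the normal closures $H_j$), and the upgrade from just-infinite to \emph{hereditarily} just-infinite comes for free from Wilson's structure theory of just-infinite profinite groups combined with positive rank gradient (Lemma~\ref{lem:just-infinite-rank-grandient}). Without this lemma, or some substitute, your sketch does not yield the hereditary property. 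Finally, the quantitative heart of the whole construction --- the fact that the cost of all torsion and invariant-killing demands can be kept below $\varepsilon$, via the identification $V_{T,p}\cong\F_p[G]^{d-1}\oplus 1\!1_G$ and the condition $\delta>0$ of Theorem~\ref{thm:module-existence} --- is exactly the step you defer as ``the hard part,'' so the proposal stops short of the actual proof precisely where the work begins.
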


Here $b_1^{(2)}(\Gamma)$ denotes the first $\ell^2$-Betti number of $\Gamma$; see e.g.~\cite{Kammeyer19, Lueck:l2-invariants} for background on $\ell^2$-invariants.
Groups with positive first $\ell^2$-Betti number can be considered to be `big'.
On the other hand, just-infinite groups are arguably the smallest type of infinite groups.
It is therefore natural to ask how these notions of the size of a group are related to each other.
In the case of finitely generated simple groups, examples with positive first $\ell^2$-Betti numbers were constructed by Osin-Thom \cite{OsinThom13}.
Regarding the easy fact that branch groups always have vanishing first $\ell^2$-Betti numbers, the above theorem completes the picture of which types of finitely generated, just-infinite groups can admit a positive first $\ell^2$-Betti number.
Interestingly, the locally zero-one groups of Ershov and Jaikin-Zapirain lie on the other side of the spectrum as they tend to have property $(\T)$, which is a well-known obstruction to having a positive first $\ell^2$-Betti number (see~\cite{BekkaValette97}). 

The construction of finitely generated, residually finite torsion groups that are `big' in some sense received a lot of attention; examples were given  by Ershov~\cite{Ershov08}, Olshanskii and Osin~\cite{OlshanskiiOsin08}, and Osin~\cite{Osin09}.
The first examples with positive rank gradient were constructed independently by Osin~\cite{Osin11} and Schlage-Puchta~\cite{SchlagePuchta12}.
Shortly afterwards, L\"uck and Osin~\cite{LuckOsin11} were able to modify the construction in~\cite{Osin11} in order to obtain finitely generated, residually finite torsion groups with positive first $\ell^2$-Betti number.
All of the just-mentioned prior constructions of `big' torsion groups are based on taking direct limits of groups.
Starting with a group $G$ that is `big' in some sense -- e.g.\ by being hyperbolic or satisfying some inequality concerning a virtual or weighted notion of deficiency -- the idea is to successively quotient out high powers of elements of $G$, such that the corresponding quotient remains `big'.
Repeating this procedure for each element of $G$, while making sure that some appropriate properly descending residual chain of $G$ remains properly descending, guarantees that the limit of this process provides an infinite torsion group with the desired properties.

Our approach is inspired by these results, however, we develop a rather opposite strategy and build up our groups from finite to infinite.
More precisely, we inductively construct an inverse system
\[
G_0 \stackrel{q_1}{\longleftarrow} G_1 \stackrel{q_2}{\longleftarrow} G_2 \stackrel{q_3}{\longleftarrow} \cdots
\]
of finite groups $(G_i)_{i \in \N_0}$ endowed with generating sets $T_i = \{t_1^{(i)},\dots,t_d^{(i)}\}$ such that 
$G_{i} = V_{i} \rtimes G_{i-1}$ for a suitable $\F_{p_{i}}[G_{i-1}]$-module $V_{i}$ and $q_i(t_j^{(i)}) = t_j^{(i-1)}$ for all $i,j$.
Afterwards we will show that the inverse limit $G_{\infty} = \varprojlim_{i \to \infty} G_i$ is a hereditarily just-infinite profinite group and that the embedding
\[
\iota \colon \Gamma = \langle (t_1^{(i)})_{i \in \N},\ldots,(t_d^{(i)})_{i \in \N} \rangle \rightarrow G_{\infty}
\]
has a torsion image and extends to an isomorphism $\widehat{\iota} \colon \widehat{\Gamma} \rightarrow G_{\infty}$.
Using this, we will deduce that $\Gamma$ is hereditarily just-infinite as well.
This approach has the advantage that it neither involves a transition to the largest residually finite quotient of some group, nor does it involve a transition to an abstract just-infinite quotient. 
Since moreover $\widehat{\Gamma} \cong G_{\infty}$ and the groups $G_i$ can, at least in principle, be written down explicitly,
we have control on the finite quotients of $\Gamma$.
The cost of our approach is that the techniques for estimating the first $\ell^2$-Betti number that were used by L\"uck and Osin cannot be applied in our case.
However, in Proposition~\ref{prop:lowerboundL2} we overcome this issue by showing that the first $\ell^2$-Betti number of a finitely generated group $\Gamma$ is bounded below by $\limsup_{i \to \infty} \frac{ b_1(N_i,\F_{p_i})}{[\Gamma:N_i]}$, where $(N_i)_{i \in \N}$ is a residual chain of $\Gamma$ and $(p_i)_{i \in \N}$ is a  sequence of primes going to infinity.
This result, which might be of independent interest, has the advantage that it is not based on the specific construction of $\Gamma$.
As a consequence, it can be applied in a variety of situations.
For instance, it implies that already the $\Pi$-graded groups in Osin's earlier work~\cite{Osin11} have a positive first $\ell^2$-Betti number whenever $\Pi$ is a sequence of primes going to infinity.

It is natural to study interactions between various notions of `bigness' arising in group theory. 
For instance, Barnea and Schlage-Puchta \cite{BSP20} asked whether groups with positive rank gradient have large normal subgroup growth, i.e., of the fastest type $n^{\log(n)}$.
Here we answer this question negatively.
 In Proposition~\ref{prop:normal-growth} we show that topologically finitely generated $\Pi$-graded profinite groups (where $\Pi$ is a sequence of pairwise distinct primes) always have polynomial normal subgroup growth. 
The groups from our main theorem (and the ones constructed by Osin~\cite{Osin11}) have positive rank gradient, hence they provide counterexamples to \cite[Problem 5]{BSP20}.
 
\section{$\Pi$-graded groups and profinite completions}
In this paper, we deal with discrete groups and profinite groups. To distinguish these two cases, we denote profinite groups in bold face throughout.

The \emph{profinite completion} of a residually finite group $G$ is defined as the inverse limit $\prof{\widehat{G}} \defeq \varprojlim_{N \unlhd_f G} G / N$; here we write $N \unlhd_f G$ to indicate that $N$ is a normal subgroup of finite index in~$G$.
We recall that a profinite group $\mathbf{G}$ is said to be \emph{topologically finitely generated} if it contains a finitely generated, dense subgroup. 

\begin{definition}\label{def:CSP-chain-residual-chain}
Let $G$ be a residually finite group.
A descending chain $N_0 \geq N_1 \geq N_2 \geq \ldots$ of finite index normal subgroups $N_n \trianglelefteq_f G$ is called a \emph{residual chain}, if $\bigcap_{n \in \N} N_n = \{1\}$.
If moreover each finite index normal subgroup $N \leq G$ contains $N_n$ for some $n \in \N$, then we say that $(N_n)_{n \in \N}$ is \emph{cofinal}.
\end{definition}
When dealing with residually finite groups $G$, it often happens that there is a residual chain $(N_i)_{i \in \N}$ in $G$ that naturally arises and that one can understand.
In general, there is no reason to expect that this chain determines the profinite completion of $G$.
Only cofinal residual chains induce the full profinite topology on $G$.
More precisely, a residual chain $(N_n)_{n \in \N}$ of $G$ is cofinal  if and only if the canonical map
\[
\prof{\widehat{G}} \rightarrow \varprojlim \limits_{k \in \N} G / N_k,\ (g_N)_N \mapsto (g_{N_k})_{N_k}
\]
is an isomorphism.
It is well-known that every finitely generated residually finite group admits a cofinal residual chain.

Let $\Gamma$ be a group and let $p$ be a prime number. We write $D_p(\Gamma)$ for the kernel of the canonical map $\Gamma \to H_1(\Gamma,\F_p)$.
Let $\Pi = (p_i)_{i\in \N}$ be sequence of prime numbers. 
We define a normal series of $\Gamma$ recursively by $\Gamma^{\Pi(0)} := \Gamma$ and 
\[ \Gamma^{\Pi(n)} := D_{p_n}(\Gamma^{\Pi(n-1)}).\]

\begin{definition}
We say that $\Gamma$ is \emph{$\Pi$-graded}, if $(\Gamma^{\Pi(n)})_{n \in \N}$ is a residual chain.
\end{definition}
We note that every subgroup of a $\Pi$-graded group is $\Pi$-graded.

\begin{proposition}\label{prop:torsion}
Let $\Pi = (p_i)_{i \in \N}$ be a sequence of prime numbers going to infinity and let $\Gamma$ be a $\Pi$-graded torsion group. Then the following hold.
\begin{enumerate}
\item $(\Gamma^{\Pi(n)})_{n \in \N}$ is a cofinal residual chain.
\item Every factor of $\Gamma$ is $\Pi$-graded and, in particular, residually finite.
\end{enumerate}
\end{proposition}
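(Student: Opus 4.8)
The plan is to isolate one auxiliary fact and deduce both statements from it.

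\textbf{Auxiliary fact.} \emph{If $\Delta$ is a $\Pi'$-graded torsion group, where $\Pi' = (q_i)_{i\in\N}$ is a sequence of primes, then the order of every element of $\Delta$ is divisible only by primes occurring in $\Pi'$.} To prove this, note that $\Delta^{\Pi'(k)} = D_{q_k}(\Delta^{\Pi'(k-1)})$, so each successive quotient $\Delta^{\Pi'(k-1)}/\Delta^{\Pi'(k)}$ is a finite elementary abelian $q_k$-group, and hence $\Delta/\Delta^{\Pi'(n)}$ is a finite group whose order is divisible only by $q_1,\dots,q_n$. Thus, if some $g\in\Delta$ had order $m$ with a prime divisor $\ell$ not occurring in $\Pi'$, the element $g^{m/\ell}$, which has order exactly $\ell$, would map to the identity in every $\Delta/\Delta^{\Pi'(n)}$ and therefore lie in $\bigcap_n\Delta^{\Pi'(n)}=\{1\}$, which is absurd. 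I expect this fact, applied to the subgroups $\Gamma^{\Pi(n)}$ rather than to $\Gamma$ itself, to be the conceptual heart of the proof.

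The second ingredient is that a tail of a $\Pi$-series is again a $\Pi$-series for the shifted sequence: setting $\Pi_n=(p_{n+1},p_{n+2},\dots)$, a routine induction gives $(\Gamma^{\Pi(n)})^{\Pi_n(m)}=\Gamma^{\Pi(n+m)}$, and since $(\Gamma^{\Pi(n+m)})_{m\in\N}$ is a tail of the residual chain $(\Gamma^{\Pi(m)})_m$ it consists of finite index subgroups of $\Gamma^{\Pi(n)}$ with trivial intersection. Hence each $\Gamma^{\Pi(n)}$ is a $\Pi_n$-graded torsion group, and by the auxiliary fact the order of every element of $\Gamma^{\Pi(n)}$ is divisible only by the primes $p_{n+1},p_{n+2},\dots$. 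Because $\Pi$ goes to infinity, every prime occurs only finitely often in $\Pi$, so for each prime $\ell$ there is an $n$ with $\ell\notin\{p_{n+1},p_{n+2},\dots\}$; equivalently, no fixed prime divides the order of an element of $\Gamma^{\Pi(n)}$ for all $n$ simultaneously.

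To prove (2), fix $K\trianglelefteq\Gamma$ and let $\pi\colon\Gamma\to\Gamma/K$ be the quotient map. As the operator $D_p$ is compatible with surjections, an induction yields $(\Gamma/K)^{\Pi(n)}=\pi(\Gamma^{\Pi(n)})=\Gamma^{\Pi(n)}K/K$ for all $n$; moreover each $(\Gamma/K)^{\Pi(n-1)}/(\Gamma/K)^{\Pi(n)}$ is a quotient of the finite elementary abelian $p_n$-group $\Gamma^{\Pi(n-1)}/\Gamma^{\Pi(n)}$, hence is itself finite elementary abelian $p_n$. So $((\Gamma/K)^{\Pi(n)})_n$ is a descending chain of finite index normal subgroups with the required successive quotients, and it remains only to verify $\bigcap_n(\Gamma/K)^{\Pi(n)}=\{1\}$. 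If $v\neq 1$ lay in this intersection, choose a prime $\ell$ dividing the (finite) order of $v$; for each $n$ write $v=\pi(g_n)$ with $g_n\in\Gamma^{\Pi(n)}$, so that $\ell$ divides $\ord(g_n)$. By the preceding paragraph this forces $\ell\in\{p_{n+1},p_{n+2},\dots\}$ for every $n$, a contradiction. Therefore $\Gamma/K$ is $\Pi$-graded, and in particular residually finite.

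Finally, (1) follows by applying (2) to a finite index normal subgroup $N\trianglelefteq_f\Gamma$: the chain $((\Gamma/N)^{\Pi(n)})_n$ is then a residual chain inside the \emph{finite} group $\Gamma/N$, so its terms form a non-increasing sequence of subgroups with trivial intersection and must equal $\{1\}$ for all large $n$; unwinding, $\Gamma^{\Pi(n)}N=N$, i.e.\ $\Gamma^{\Pi(n)}\subseteq N$, for all sufficiently large $n$. Thus the residual chain $(\Gamma^{\Pi(n)})_n$ is cofinal. The main obstacle in this plan is the auxiliary fact of the first paragraph combined with the realisation that it must be deployed on the subgroups $\Gamma^{\Pi(n)}$ and merged with $\Pi\to\infty$; arguing directly with $\Gamma$ alone does not suffice, since a priori a $\Pi$-graded torsion group could have intricate finite quotients.
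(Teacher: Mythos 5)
Your proposal is correct and follows essentially the same route as the paper: the key point in both is that elements of $\Gamma^{\Pi(k)}$ have orders involving only the primes $p_n$ with $n>k$, combined with $\Pi\to\infty$ (you prove this fact via the shifted grading of $\Gamma^{\Pi(k)}$, whereas the paper simply asserts it), and the verbal-subgroup argument giving $(\Gamma/K)^{\Pi(n)}=\pi(\Gamma^{\Pi(n)})$ is the same. The only organizational difference is that you deduce (1) from (2) applied to the finite quotient $\Gamma/N$, while the paper proves (1) directly from the coprimality of element orders with $|\Gamma:N|$; both are valid.
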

\begin{proof}
Let $N \trianglelefteq \Gamma$ be a finite index normal subgroup of $\Gamma$. As the sequence $\Pi$ tends to infinity, we find some index $k \in \N$ such that $p_n$ does not divide $|\Gamma:N|$ for all $n \geq k$.
We note that the order of every element in $\Gamma^{\Pi(k)}$ is a product of primes $p_n$ with $n \geq k$. These orders are coprime to $|\Gamma:N|$ and therefore 
$\Gamma^{\Pi(k)} \subseteq N$.

Let $\pi\colon \Gamma \to \Delta$ be a surjective homomorphism. One can check by induction on $n$ that $\pi(\Gamma^{\Pi(n)}) = \Delta^{\Pi(n)}$ using that $\Gamma^{\Pi(n)}$ is a verbal subgroup of $\Gamma^{\Pi(n-1)}$. Let $g \in \Delta$ with $g \neq 1$ be given. Since $\Pi$ tends to infinity, we find $k \in \N$ such that the order of $g$ is not divisible by $p_n$ for all $n \geq k$. The order of any element from the $\pi$-preimage of $g$ is divisible by the order of $g$, hence $\Gamma^{\Pi(k)} \cap \pi^{-1}(g) = \emptyset$. This means, that $g$ is not contained in $\Delta^{\Pi(k)}$. We conclude that $\Delta$ is $\Pi$-graded.
\end{proof}
The second assertion implies that every normal subgroup of a $\Pi$-graded torsion group is closed in the profinite topology (if $\Pi$ tends to $\infty$). What about other subgroups? The following problem suggests itself:
\begin{problem}\label{problem:LERF}
Is every $\Pi$-graded torsion group $\LERF$ (if $\Pi$ tends to $\infty$)?
\end{problem}

Let $\prof{G}$ be a profinite group.
The series $\prof{G}^{\Pi(n)}$ is defined analogously using continuous homology.  If $\prof{G}$ is topologically finitely generated, then $(\prof{G}^{\Pi(n)})_{n\in \N}$ consists of open normal subgroups. We say that $\prof{G}$ is $\Pi$-graded if $\bigcap_{n\in\N} \prof{G}^{\Pi(n)} = \{1\}$.

\begin{corollary}\label{cor:profinite-completion-torsion-pi-graded}
Let $\Pi = (p_i)_{i \in \N}$ be a sequence of prime numbers going to infinity. 
If $\Gamma$ is a dense torsion subgroup of a $\Pi$-graded profinite group $\mathbf{G}$, then the canonical map $\prof{\widehat{\Gamma}} \to \mathbf{G}$ is an isomorphism.
\end{corollary}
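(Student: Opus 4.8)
The plan is to produce an explicit cofinal residual chain of $\Gamma$ coming from $\mathbf{G}$, and then to invoke the characterisation of cofinality recorded right after Definition~\ref{def:CSP-chain-residual-chain}. Set $\Gamma_n := \Gamma \cap \mathbf{G}^{\Pi(n)}$. The subgroups $\mathbf{G}^{\Pi(n)}$ form a descending chain of open normal subgroups of $\mathbf{G}$ with $\bigcap_n \mathbf{G}^{\Pi(n)} = \{1\}$, so $\mathbf{G} = \varprojlim_n \mathbf{G}/\mathbf{G}^{\Pi(n)}$ with all quotients finite. Since $\Gamma$ is dense, the composition $\Gamma \to \mathbf{G} \to \mathbf{G}/\mathbf{G}^{\Pi(n)}$ is surjective with kernel $\Gamma_n$; hence $\Gamma_n$ is a normal subgroup of finite index in $\Gamma$ and $\Gamma/\Gamma_n \cong \mathbf{G}/\mathbf{G}^{\Pi(n)}$. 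Taking the inverse limit over $n$ identifies $\mathbf{G}$ with $\varprojlim_n \Gamma/\Gamma_n$, and by uniqueness of the continuous extension of $\Gamma \hookrightarrow \mathbf{G}$ to $\widehat{\Gamma}$, the canonical map $\widehat{\Gamma}\to\mathbf{G}$ is, under this identification, exactly the canonical map $\widehat{\Gamma}\to\varprojlim_n \Gamma/\Gamma_n$. Moreover $\Gamma$ is residually finite, being a subgroup of the profinite group $\mathbf{G}$, and $\bigcap_n \Gamma_n = \Gamma\cap\bigcap_n\mathbf{G}^{\Pi(n)} = \{1\}$, so $(\Gamma_n)_n$ is a residual chain of $\Gamma$. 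It therefore remains only to show that $(\Gamma_n)_n$ is \emph{cofinal}.

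So let $N \trianglelefteq_f \Gamma$ and write $m := [\Gamma : N]$. Since $\Pi$ tends to infinity we may choose $k\in\N$ with $p_j \nmid m$ for all $j \geq k$; I claim $\Gamma_k \subseteq N$. The crucial input is the analogue, now inside the profinite group $\mathbf{G}$, of the coprimality observation in the proof of Proposition~\ref{prop:torsion}: \emph{every torsion element of $\mathbf{G}^{\Pi(k)}$ has order divisible only by primes from $\{p_j : j \geq k\}$.} To see this, let $g \in \mathbf{G}^{\Pi(k)}$ be a torsion element, let $\ell$ be a prime divisor of $\ord(g)$, and suppose $\ell \notin \{p_j : j \geq k\}$; replacing $g$ by $g^{\ord(g)/\ell}$ we may assume $\ord(g) = \ell$. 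For each $j \geq k$ the quotient $\mathbf{G}^{\Pi(j)}/\mathbf{G}^{\Pi(j+1)} = \mathbf{G}^{\Pi(j)}/D_{p_{j+1}}(\mathbf{G}^{\Pi(j)})$ is abelian of exponent dividing $p_{j+1}$; since $\gcd(\ell, p_{j+1}) = 1$, an element of order $\ell$ maps trivially to it, so $g \in \mathbf{G}^{\Pi(j)}$ forces $g \in \mathbf{G}^{\Pi(j+1)}$. Iterating from $g \in \mathbf{G}^{\Pi(k)}$ gives $g \in \bigcap_j \mathbf{G}^{\Pi(j)} = \{1\}$, contradicting $\ell > 1$; this proves the claim. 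Now take $g \in \Gamma_k = \Gamma\cap\mathbf{G}^{\Pi(k)}$: as an element of $\Gamma$ it is torsion, and it lies in $\mathbf{G}^{\Pi(k)}$, so by the claim $\ord(g)$ is coprime to $m$. Hence the image of $g$ in $\Gamma/N$ has order dividing $\gcd(\ord(g), m) = 1$, i.e.\ $g \in N$. Thus $\Gamma_k \subseteq N$, and $(\Gamma_n)_n$ is cofinal.

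By the cited characterisation of cofinal residual chains, the canonical map $\widehat{\Gamma}\to\varprojlim_n \Gamma/\Gamma_n \cong \mathbf{G}$ is an isomorphism, which is the assertion of the corollary. The argument is formal manipulation of inverse limits apart from the coprimality step in the second paragraph, which is the heart of the matter; it is exactly there that the three hypotheses — $\Pi$ tends to infinity, $\Gamma$ is torsion, and $\bigcap_n \mathbf{G}^{\Pi(n)} = \{1\}$ — are used together, so I expect that to be the only step requiring real thought. The one further point to keep an eye on is that the first paragraph relies on the $\mathbf{G}^{\Pi(n)}$ being open normal subgroups (so that the $\Gamma_n$ have finite index in $\Gamma$ and $\mathbf{G} = \varprojlim_n \mathbf{G}/\mathbf{G}^{\Pi(n)}$), which is the relevant feature of $\Pi$-graded profinite groups in this setting.
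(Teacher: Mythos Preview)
Your proof is correct and follows essentially the same route as the paper's. The paper's two-sentence argument simply asserts that the subspace topology induced by $\mathbf{G}$ on $\Gamma$ is the full profinite topology of $\Gamma$, tacitly invoking the coprimality idea from the proof of Proposition~\ref{prop:torsion}; you make exactly this step explicit by working with the chain $\Gamma_n = \Gamma \cap \mathbf{G}^{\Pi(n)}$ and proving its cofinality via the order computation for torsion elements in $\mathbf{G}^{\Pi(k)}$. Your closing remark about the openness of $\mathbf{G}^{\Pi(n)}$ is well placed: the paper notes this openness only under the hypothesis that $\mathbf{G}$ is topologically finitely generated, which is implicit in all the applications but not stated in the corollary itself.
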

\begin{proof}
As every residual chain of open normal subgroups in a profinite group is cofinal, we deduce that the profinite topology on $G$ induces the full profinite topology on $\Gamma$ and this implies the assertion. 
\end{proof}
\begin{remark}
Recall that a pair of residually finite groups $(\Gamma,\Delta)$ with $\Gamma \subsetneq \Delta$ is called a \emph{Grothendieck pair} if the inclusion $\iota \colon \Gamma \rightarrow \Delta$ induces an isomorphism $\widehat{\iota} \colon \prof{\widehat{\Gamma}} \rightarrow \prof{\widehat{\Delta}}$ of profinite completions.
Note that it follows from Corollary~\ref{cor:profinite-completion-torsion-pi-graded} that if $\Gamma_1,\Gamma_2$ are two dense torsion subgroups of a $\Pi$-graded profinite group $\mathbf{G}$ with
$\Gamma_1 \subsetneq \Gamma_2$, then $(\Gamma_1,\Gamma_2)$ is a Grothendieck pair.
An example of such a pair of finitely generated torsion groups, would give a negative answer to Problem~\ref{problem:LERF}.
\end{remark}

\begin{corollary}\label{cor:just-infinite}
Let $\Pi = (p_i)_{i \in \N}$ be a sequence of prime numbers going to infinity. 
Let $\prof{G}$ be a (hereditarily) just-infinite profinite $\Pi$-graded group. If $\Gamma \subseteq \prof{G}$ is a dense torsion subgroup, then $\Gamma$ is (hereditarily) just-infinite.
\end{corollary}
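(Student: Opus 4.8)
The plan is to exploit the identification of $\prof{G}$ with the profinite completion of $\Gamma$. By Corollary~\ref{cor:profinite-completion-torsion-pi-graded} the inclusion $\Gamma\hookrightarrow\prof{G}$ induces an isomorphism $\prof{\widehat\Gamma}\cong\prof{G}$, so the profinite topology of $\Gamma$ is precisely the subspace topology inherited from $\prof{G}$. A second fact I would record at the outset is that every normal subgroup of $\Gamma$ is closed in this topology: $\Gamma$ inherits a $\Pi$-grading from $\prof{G}$ (the key point being $\Gamma^{\Pi(n)}\leq\Gamma\cap\prof{G}^{\Pi(n)}$) and is torsion, so by Proposition~\ref{prop:torsion}(2) all of its factors are residually finite, which is exactly this closedness statement. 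These two inputs, together with the standard interaction between a dense subgroup and the open normal subgroups of a profinite group, are all that the argument needs.

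Assume first that $\prof{G}$ is just-infinite. Then $\Gamma$ is infinite, being dense in the infinite group $\prof{G}$. Let $N\trianglelefteq\Gamma$ be nontrivial and let $\overline N$ be its closure in $\prof{G}$. I would check that $\overline N$ is a nontrivial closed normal subgroup of $\prof{G}$: nontrivial since it contains $N$, and normal since it is the closure of a normal subgroup of the dense subgroup $\Gamma$. Just-infiniteness of $\prof{G}$ makes $\overline N$ open, hence of finite index. Now closedness of $N$ gives $N=\overline N\cap\Gamma$, and density of $\Gamma$ yields $\Gamma/N\cong\prof{G}/\overline N$; hence $[\Gamma:N]=[\prof{G}:\overline N]<\infty$, and $\Gamma$ is just-infinite.

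Now assume $\prof{G}$ is hereditarily just-infinite, and let $\Lambda\leq\Gamma$ be a subgroup of finite index. I would pass to its closure $\mathbf{H}\defeq\overline\Lambda$ in $\prof{G}$, an open subgroup with $[\prof{G}:\mathbf{H}]=[\Gamma:\Lambda]$, for which the standard description of profinite completions of finite-index subgroups identifies the profinite topology of $\Lambda$ with the topology induced from $\mathbf{H}$ (equivalently, since $\mathbf{H}$ is open, from $\prof{G}$). A finite-index subgroup of a hereditarily just-infinite profinite group is again hereditarily just-infinite, in particular just-infinite, so $\mathbf{H}$ is just-infinite; and $\Lambda$, being a subgroup of the $\Pi$-graded torsion group $\Gamma$, is again $\Pi$-graded and torsion, so its normal subgroups are closed in its profinite topology by Proposition~\ref{prop:torsion}(2). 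Running the argument of the previous paragraph with the dense torsion subgroup $\Lambda\leq\mathbf{H}$ in place of $\Gamma\leq\prof{G}$ shows that $\Lambda$ is just-infinite. Since $\Lambda$ was an arbitrary finite-index subgroup of $\Gamma$, the group $\Gamma$ is hereditarily just-infinite.

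I do not expect a deep obstacle here; the proof is essentially an exercise in moving between $\Gamma$ and $\prof{G}$. The one point that genuinely requires care is ensuring that the profinite topology of $\Gamma$ (and of each of its finite-index subgroups) really does agree with the one induced from $\prof{G}$ and that normal subgroups are closed in it --- both handled by Corollary~\ref{cor:profinite-completion-torsion-pi-graded} and Proposition~\ref{prop:torsion}(2) --- the only nonroutine sub-step being the verification that a dense torsion subgroup of a $\Pi$-graded profinite group is itself $\Pi$-graded.
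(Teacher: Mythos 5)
Your proof is correct and takes essentially the same route as the paper's: both arguments rest on the identification $\prof{\widehat{\Gamma}}\cong\prof{G}$ from Corollary~\ref{cor:profinite-completion-torsion-pi-graded} together with Proposition~\ref{prop:torsion}(2) (residual finiteness of quotients of $\Gamma$, equivalently closedness of its normal subgroups in the profinite topology). The only differences are presentational: the paper argues the just-infinite step contrapositively via a proper infinite quotient $\Delta$ and its profinite completion, whereas you argue directly with the closure $\overline{N}$ of a normal subgroup, and you spell out the reduction of the hereditary case to open subgroups of $\prof{G}$, which the paper leaves implicit.
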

\begin{proof}
Since  $\prof{G}$ is the profinite completion of $\Gamma$, 
it is sufficient to prove the statement about just-infinite groups. We argue by contraposition and assume that $\Gamma$ is not just-infinite. Then $\Gamma$ has a proper infinite factor $\Delta$. As $\Gamma$ is a $\Pi$-graded torsion group, Proposition \ref{prop:torsion} implies that $\Delta$ is residually finite, hence its profinite completion $\prof{\widehat{\Delta}}$ is a proper infinite factor of $\prof{G} \cong \prof{\widehat{\Gamma}}$. In particular, $\prof{G}$ is not just-infinite.
\end{proof}

\begin{definition}\label{def:rank-gradient}
Let $\prof{G}$ be a topologically finitely generated profinite group. 
The \emph{rank gradient} of $\prof{G}$ is defined as 
\[\mathrm{RG}(\prof{G}) = \lim_{\prof{N}\trianglelefteq_o \prof{G}} \frac{d(\prof{N})}{[\prof{G} \colon \prof{N}]},\]
where  the limit is taken over the directed set of open normal subgroups and $d(\prof{N})$ denotes the minimal cardinality of a topological generating set of $\prof{N}$. We note that this net is monotonically decreasing by the Nielsen-Schreier formula and hence converges.
\end{definition}

\begin{lemma}\label{lem:just-infinite-rank-grandient}
Let $\prof{G}$ be a topologically finitely generated, just-infinite profinite group. If $\prof{G}$ has positive rank gradient, then $\prof{G}$ is hereditarily just-infinite.
\end{lemma}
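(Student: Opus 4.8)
The plan is to prove the contrapositive: if $\prof{G}$ is topologically finitely generated and just-infinite but \emph{not} hereditarily just-infinite, then $\mathrm{RG}(\prof{G}) = 0$. The one ingredient from outside the excerpt is Wilson's structure theory of just-infinite profinite groups \cite{Wilson71}: such a $\prof{G}$ is either a branch profinite group or a near-simple one, and in either case it admits an open normal subgroup $\prof{H}$ that splits as a direct product $\prof{H} = \prof{A}_1 \times \dots \times \prof{A}_r$ of $r \geq 2$ pairwise isomorphic nontrivial closed subgroups (rigid stabilizers of the level-one vertices in the branch case; the factors of the socle in the near-simple case). Being open in the infinite group $\prof{G}$, the subgroup $\prof{H}$ is infinite and topologically finitely generated; since the $\prof{A}_i$ are pairwise isomorphic they are therefore all infinite, and each is topologically finitely generated, being a continuous quotient of $\prof{H}$. (If one prefers to avoid the branch/near-simple case split, it is enough to quote the weaker fact that a just-infinite profinite group which is not hereditarily just-infinite has an open subgroup isomorphic to a direct product of at least two infinite closed subgroups.)

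The two computations are then the following. First, $\mathrm{RG}(\prof{H}) = 0$: for each $k \in \N$ choose $\prof{A}_i^{(k)} \trianglelefteq_o \prof{A}_i$ of index at least $k$ (possible since $\prof{A}_i$ is infinite and topologically finitely generated), and set $\prof{H}^{(k)} = \prof{A}_1^{(k)} \times \dots \times \prof{A}_r^{(k)} \trianglelefteq_o \prof{H}$. Then $[\prof{H} : \prof{H}^{(k)}] \geq k^r$, while subadditivity of $d(\cdot)$ on direct products together with the Nielsen--Schreier bound $d(\prof{A}_i^{(k)}) \leq [\prof{A}_i : \prof{A}_i^{(k)}]\, d(\prof{A}_i)$ gives $d(\prof{H}^{(k)}) / [\prof{H} : \prof{H}^{(k)}] \leq k^{-(r-1)} \sum_{i=1}^r d(\prof{A}_i)$; since $r \geq 2$ this tends to $0$, and as the net defining $\mathrm{RG}$ is monotone (as already observed above) we conclude $\mathrm{RG}(\prof{H}) = 0$. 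Second, $\mathrm{RG}(\prof{H}) \geq [\prof{G} : \prof{H}] \cdot \mathrm{RG}(\prof{G})$: for $\prof{M} \trianglelefteq_o \prof{H}$ its normal core $\prof{N} = \core_{\prof{G}}(\prof{M}) \trianglelefteq_o \prof{G}$ satisfies, again by Nielsen--Schreier, $d(\prof{N}) / [\prof{G} : \prof{N}] \leq d(\prof{M}) / [\prof{G} : \prof{M}] = [\prof{G} : \prof{H}]^{-1}\, d(\prof{M}) / [\prof{H} : \prof{M}]$, and taking the infimum over $\prof{M}$ yields the inequality.

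Combining the two, $0 = \mathrm{RG}(\prof{H}) \geq [\prof{G} : \prof{H}] \cdot \mathrm{RG}(\prof{G}) \geq \mathrm{RG}(\prof{G}) \geq 0$, so $\mathrm{RG}(\prof{G}) = 0$, as required. I expect the only genuine ingredient to be the appeal to Wilson's classification; the rest is routine bookkeeping with the Nielsen--Schreier inequality. The point deserving a little care is that \emph{all} factors $\prof{A}_i$ must be infinite — otherwise one could not force $[\prof{H} : \prof{H}^{(k)}]$ to grow faster than a single $[\prof{A}_i : \prof{A}_i^{(k)}]$, which is exactly what produces the decaying factor $k^{-(r-1)}$ — and this is guaranteed here because the $\prof{A}_i$ are pairwise isomorphic and their product $\prof{H}$ is open, hence infinite, in $\prof{G}$.
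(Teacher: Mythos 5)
Your argument is correct and follows essentially the same route as the paper: contraposition, then Wilson's structure theory of just-infinite profinite groups to produce an open subgroup decomposing as a direct product of at least two infinite closed factors, then the standard Nielsen--Schreier computation showing that such a product forces the rank gradient to vanish. The only cosmetic difference is the form of Wilson's theory invoked --- the paper cites \cite[Cor.~4.5]{Wilson2000} (a closed subnormal subgroup of infinite index is a direct factor of an open normal subgroup, after ruling out infinite abelian subnormal subgroups via \cite[Cor.~3.8]{Wilson2000}), whereas you use the branch/near-simple dichotomy --- and your explicit verification that \emph{all} factors $\prof{A}_i$ are infinite addresses a point the paper passes over more quickly.
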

\begin{proof}
We note that $\prof{G}$ does not contain an infinite abelian subnormal subgroup.  Indeed, by \cite[Cor.~3.8]{Wilson2000} this would imply that $\prof{G}$ is virtually abelian and thus has vanishing rank gradient. 

We argue by contraposition and assume that $\prof{G}$ is not hereditarily just-infinite. Then there is a closed subnormal subgroup $\prof{K} \leq_c \prof{G}$ of infinite index.  It follows from a result of Wilson \cite[Corollary 4.5]{Wilson2000} that $\prof{K}$ is a direct factor of an open normal subgroup $\prof{U} \trianglelefteq_o \prof{G}$, i.e. $\prof{U} = \prof{K} \times \prof{L}$ with an infinite closed subgroup $\prof{L}$.
This implies that the rank gradient of $\prof{G}$ vanishes. For completeness, we include the argument. Let $\prof{K}_n \trianglelefteq_o \prof{K}$ and $\prof{L}_n \trianglelefteq_o \prof{L}$ be residual chains of open normal subgroups. Then
\begin{align*}
	\mathrm{RG}(\prof{G}) &= \frac{1}{|\prof{G}:\prof{U}|} \mathrm{RG}(\prof{U}) \leq \lim_{n \to \infty} \frac{d(\prof{K}_n) + d(\prof{L}_n)}{|\prof{G}:\prof{U}|\cdot |\prof{K}:\prof{K}_n| \cdot |\prof{L}:\prof{L}_n|} \\
	&\leq \lim_{n \to \infty}\frac{d(\prof{K})}{|\prof{G}:\prof{U}| \cdot |\prof{L}:\prof{L}_n|} + \frac{d(\prof{L}) }{|\prof{G}:\prof{U}|\cdot |\prof{K}:\prof{K}_n|} = 0. \qedhere
\end{align*}

\end{proof}

\section{A result on the first $\ell^2$-Betti number}
Let $G$ be a finitely generated group. A \emph{finitely presented cover} of $G$ is a pair $(\Gamma,\pi)$ where $\Gamma$ is a finitely presented group and $\pi \colon \Gamma \to G$ is an epimorphism.
The following lemma strengthens a result of Ershov-L\"uck~\cite{ErshovLuck}.
\begin{lemma}\label{lem:finite-cover-l2}
 Let $G$ be a finitely generated group with a residual chain $(N_i)_{i\in\N}$. Then
 \[
 	b_1^{(2)}(G) = \inf_{(\Gamma,\pi)} \lim_{i \to \infty} \frac{b_1(\pi^{-1}(N_i),\Q)}{[G:N_i]}
 \]
where the infimum is taken over all finitely presented covers $(\Gamma,\pi)$ of $G$.
 \end{lemma}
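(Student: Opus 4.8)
The plan is to reduce the claimed identity to a statement about \emph{relative} first $\ell^2$-Betti numbers and then to sandwich $b_1^{(2)}(G)$ from both sides. For a finitely presented cover $(\Gamma,\pi)$ of $G$, set $K=\ker\pi$; then $\Gamma/K\cong G$, the subgroups $\pi^{-1}(N_i)$ form a descending chain of finite-index normal subgroups of $\Gamma$ with $\bigcap_i\pi^{-1}(N_i)=K$, and $[\Gamma:\pi^{-1}(N_i)]=[G:N_i]$. First I would invoke the approximation theorem in the version valid for groups of type $\mathrm{FP}_2$ and for chains of finite-index normal subgroups with possibly non-trivial intersection (see \cite{ErshovLuck} for the form needed here; this is the Ershov--Lück input that the lemma builds on): the limit $L(\Gamma,\pi):=\lim_i b_1(\pi^{-1}(N_i),\Q)/[G:N_i]$ exists and equals the relative $\ell^2$-Betti number
\[
\beta(\Gamma,\pi):=\dim_{\mathcal N(G)}H_1(\Gamma;\mathcal N(G)),
\]
where $\mathcal N(G)$ is the group von Neumann algebra of $G$, viewed as a $\Z[\Gamma]$-module through $\pi$. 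Since $b_1^{(2)}(G)=\dim_{\mathcal N(G)}H_1(G;\mathcal N(G))$ by definition, the lemma reduces to the equality $b_1^{(2)}(G)=\inf_{(\Gamma,\pi)}\beta(\Gamma,\pi)$, and the existence of the limit in the statement is part of this input.

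For ``$\le$'' I would feed the extension $1\to K\to\Gamma\xrightarrow{\pi}G\to1$ into the five-term exact homology sequence with coefficients in the $\Gamma$-module $\mathcal N(G)$, on which $K$ acts trivially. Its right-hand end shows that the natural map $H_1(\Gamma;\mathcal N(G))\to H_1(G;\mathcal N(G))$ is surjective; monotonicity of $\dim_{\mathcal N(G)}$ under quotients of $\mathcal N(G)$-modules then gives $b_1^{(2)}(G)\le\beta(\Gamma,\pi)$ for every cover, hence $b_1^{(2)}(G)\le\inf_{(\Gamma,\pi)}\beta(\Gamma,\pi)$.

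For ``$\ge$'' I would build covers realizing $b_1^{(2)}(G)$ in the limit. Fix a finite generating set $X$ and a presentation $G=\langle X\mid R\rangle$, write $R$ as an increasing union $R_1\subseteq R_2\subseteq\cdots$ of finite subsets, and let $\Gamma_n=\langle X\mid R_n\rangle$ with the canonical epimorphism $\pi_n\colon\Gamma_n\to G$; each $(\Gamma_n,\pi_n)$ is a finitely presented cover. Fox calculus provides the beginning of a free resolution $\bigoplus_{R}\Z[G]\to\bigoplus_{X}\Z[G]\to\Z[G]\to\Z\to0$ of the trivial module over $\Z[G]$ (exact at the two middle terms, first differential $e_x\mapsto x-1$, second differential assembled from the Fox derivatives of the relators), and likewise over $\Z[\Gamma_n]$ using $R_n$. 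Applying $-\otimes_{\Z[G]}\mathcal N(G)$, resp.\ $-\otimes_{\Z[\Gamma_n]}\mathcal N(G)$ via $\pi_n$, and using that $\mathrm{Tor}_1$ is computed from such a partial resolution, one gets $H_1(G;\mathcal N(G))\cong\ker\delta/\operatorname{im}\partial_\infty$ and $H_1(\Gamma_n;\mathcal N(G))\cong\ker\delta/\operatorname{im}\partial_n$, where $\delta\colon\mathcal N(G)^X\to\mathcal N(G)$, $(m_x)_x\mapsto\sum_x(x-1)m_x$, is the \emph{same} map in both cases -- precisely because $\pi_n$ sends the generators of $\Gamma_n$ to those of $G$ -- and where $\operatorname{im}\partial_1\subseteq\operatorname{im}\partial_2\subseteq\cdots$ is an increasing exhaustion of $\operatorname{im}\partial_\infty$ inside $\ker\delta\subseteq\mathcal N(G)^X$. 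Since $\dim_{\mathcal N(G)}\ker\delta\le|X|<\infty$ and $\dim_{\mathcal N(G)}$ is additive on short exact sequences,
\[
\beta(\Gamma_n,\pi_n)=\dim_{\mathcal N(G)}\ker\delta-\dim_{\mathcal N(G)}\operatorname{im}\partial_n,
\]
and by cofinality of the dimension function (continuity along increasing unions of submodules) this decreases to $\dim_{\mathcal N(G)}\ker\delta-\dim_{\mathcal N(G)}\operatorname{im}\partial_\infty=b_1^{(2)}(G)$. Hence $\inf_{(\Gamma,\pi)}\beta(\Gamma,\pi)\le b_1^{(2)}(G)$, which together with the previous paragraph finishes the proof.

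I expect the only genuinely delicate point to be the first paragraph: one must quote the approximation theorem in exactly the needed generality -- finitely presented (or $\mathrm{FP}_2$) $\Gamma$, a chain of finite-index normal subgroups with non-trivial intersection $K$ -- and identify its value with $\dim_{\mathcal N(\Gamma/K)}H_1(\Gamma;\mathcal N(\Gamma/K))$; this is where the analytic content (convergence of spectral measures and the ensuing bounds) sits. Everything else is formal: the five-term exact sequence, the Fox-calculus partial resolution, and the standard additivity and cofinality of the Lück dimension function; the one mild bookkeeping point is matching the two chain complexes for $\Gamma_n$ and $G$, which works because $\pi_n$ is chosen to fix generators. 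Finally, the degenerate cases need no separate treatment: if $G$ is already finitely presented -- in particular if $G$ is finite -- one takes $\Gamma_n=G$ for large $n$, so that $\beta(\Gamma_n,\pi_n)=b_1^{(2)}(G)$ exactly.
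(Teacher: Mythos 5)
Your argument is correct and, for the hard inequality, follows the same blueprint as the paper: both proofs exhaust the relations of $G$ by finite subsets $R_n$, form the finitely presented covers $\Gamma_n=\langle X\mid R_n\rangle$, and let the images of the Fox matrices increase to the image of the full second differential, so that continuity of the von Neumann dimension drives $\beta(\Gamma_n,\pi_n)$ down to $b_1^{(2)}(G)$. The differences lie in how the two key inputs are justified. You route everything through the relative invariant $\beta(\Gamma,\pi)=\dim_{\mathcal{N}(G)}H_1(\Gamma;\mathcal{N}(G))$, identify it with the limit $L(\Gamma,\pi)$ by the approximation theorem for chains with non-trivial intersection, and then obtain the easy inequality $b_1^{(2)}(G)\le\beta(\Gamma,\pi)$ from the five-term exact sequence; the paper instead quotes \cite[Theorem 3.1(1)]{ErshovLuck} verbatim for that inequality, and for the approximation step works with permutation characters $\psi_{j,i}$ converging pointwise to $\pi_j^*(\delta^{(2)})$ and invokes \cite[Theorem 3.5]{Kionke2018}, then concludes with \cite[Theorem 1.12(3)]{Lueck:l2-invariants} in the Hilbert-module picture. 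Your version is more self-contained (it essentially reproves the Ershov--L\"uck inequality) at the cost of needing the approximation theorem in its strong form $L(\Gamma,\pi)=\beta(\Gamma,\pi)$ for \emph{every} finitely presented cover; you rightly flag this as the delicate point, and it is legitimate here precisely because the quotient $G=\Gamma/K$ carries a residual chain by hypothesis, hence is residually finite and satisfies the determinant conjecture, which is what makes approximation over a non-trivial kernel available. One small bookkeeping remark: the additivity and cofinality properties of L\"uck's extended dimension function that you use for $\dim\ker\delta-\dim\operatorname{im}\partial_n$ and for $\sup_n\dim\operatorname{im}\partial_n=\dim\operatorname{im}\partial_\infty$ are exactly the algebraic counterpart of the paper's appeal to closures of increasing unions of images in $\ell^2(G)^S$, so the two endgames are equivalent.
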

 \begin{proof}
 The inequality $\leq$ follows directly from \cite[Theorem 3.1 (1)]{ErshovLuck}.
 
 For the converse inequality, we use the language of characters of groups and the approximation results from \cite{Kionke2018}. We pick a presentation $G \cong \langle S \mid R \rangle$  of $G$ with a finite generating set $S$.
 The set of relations $R$ may be infinite; nevertheless, we can write it as an increasing union $R = \bigcup_{j=1}^\infty R_j$ of finite subsets.
 Define $\Gamma_j = \langle S \mid R_j \rangle$ and let $\pi_j \colon \Gamma_j \to G$ denote the associated epimorphism.
 The presentation of $\Gamma_j$ provides us with a partial free resolution of $\Z$ over $\Z[\Gamma_j]$ that yields the chain complex
 \[
 	\mathcal{C}_j \colon \quad \Z[\Gamma_j]^{R_j} \stackrel{\partial^2_j}{\longrightarrow} \Z[\Gamma_j]^{S} \stackrel{\partial^1_j}{\longrightarrow} \Z[\Gamma_j]
 \]  
 of free $\Z[\Gamma_j]$-modules. Here $\partial^2_j$ is given by the Fox matrix. We denote by $\psi_j = \pi_j^*(\delta^{(2)})$ (resp.\ $\psi_{j,i}$) the pullback of the regular character of $G$ (resp.\ of $G/N_i$) to~$\Gamma_j$. For a character $\phi$ of $\Gamma_j$, we write $b_1^\phi(\mathcal{C}_j)$ to denote the first $\phi$-Betti number of~$\mathcal{C}_j$.
 In particular,
 \[
 	b_1^{\psi_{j,i}}(\mathcal{C}_j) = \frac{b_1(\pi_j^{-1}(N_i))}{[G:N_i]}.
 \]
 The characters $\psi_{j,i}$ are finite permutation characters and they converge pointwise to $\psi_j$, hence by \cite[Theorem 3.5]{Kionke2018}, we have
 \[
 	\lim_{i\to \infty}b_1^{\psi_{j,i}}(\mathcal{C}_j) = b_1^{\psi_j}(\mathcal{C}_j).
 \]
By definition, $b_1^{\psi_j}(\mathcal{C}_j)$ is the first Betti number of the complex
 \[
 	\ell^{2}(G)^{R_j} \stackrel{\partial^2_j}\longrightarrow \ell^2(G)^S \longrightarrow \ell^2(G).
 \]
 The closure of the union of the images of the $\partial_j^2$ is the closure of the image of $\partial^2$ in the complex
 	$\ell^{2}(G)^{R} \stackrel{\partial^2}\longrightarrow \ell^2(G)^S \longrightarrow \ell^2(G)$
 that computes the first $\ell^2$-Betti number of $G$. It follows from \cite[Theorem 1.12 (3)]{Lueck:l2-invariants} that $b_1^{(2)}(G) = \inf b_1^{\psi_j}(\mathcal{C}_j)$.
 \end{proof}
 \begin{proposition}\label{prop:lowerboundL2}
Let $G$ be a finitely generated group with residual chain $(N_i)_{i\in\N}$. Let $p_1,p_2,p_3,\dots$ be a sequence of prime numbers going to infinity.
Then the first $\ell^2$-Betti number satisfies
\[
	b_1^{(2)}(G) \geq  \limsup_{i \to \infty} \frac{ b_1(N_i,\F_{p_i})}{[G:N_i]}.
\]
\end{proposition}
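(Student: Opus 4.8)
The plan is to deduce the bound from Lemma~\ref{lem:finite-cover-l2}: by that lemma it is enough to prove that
\[
	\lim_{i\to\infty}\frac{b_1(\pi^{-1}(N_i),\Q)}{[G:N_i]} \;\geq\; \limsup_{i\to\infty}\frac{b_1(N_i,\F_{p_i})}{[G:N_i]}
\]
for every finitely presented cover $(\Gamma,\pi)$ of $G$, and then to take the infimum over all such covers (there is at least one, as a finitely generated free group surjects onto $G$). So fix such a cover and put $M_i:=\pi^{-1}(N_i)$; this is a finite index, hence finitely presented, subgroup of $\Gamma$ with $[\Gamma:M_i]=[G:N_i]$. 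Since $\pi$ restricts to an epimorphism $M_i\twoheadrightarrow N_i$ and first mod-$p$ homology is functorial, $b_1(N_i,\F_{p_i})\leq b_1(M_i,\F_{p_i})$, so it suffices to bound $b_1(M_i,\F_{p_i})$ from above.

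The next step is to compare mod-$p$ and rational first homology of $M_i$. Write $T_i$ for the torsion subgroup of $H_1(M_i,\Z)$. The universal coefficient theorem yields $H_1(M_i,\F_p)\cong H_1(M_i,\Z)\otimes\F_p$ (as $H_0(M_i,\Z)=\Z$ is torsion free), hence
\[
	b_1(M_i,\F_p) \;=\; b_1(M_i,\Q) + \dim_{\F_p}(T_i\otimes\F_p),
\]
and $\dim_{\F_p}(T_i\otimes\F_p)$, being the number of invariant factors of $T_i$ divisible by $p$, is at most $\log_p\lvert T_i\rvert=\log\lvert T_i\rvert/\log p$. Thus for all $i$
\[
	\frac{b_1(M_i,\F_{p_i})}{[\Gamma:M_i]} \;\leq\; \frac{b_1(M_i,\Q)}{[\Gamma:M_i]} + \frac{\log\lvert T_i\rvert}{[\Gamma:M_i]\,\log p_i}.
\]

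The heart of the matter is a linear bound $\log\lvert T_i\rvert\leq C\,[\Gamma:M_i]$ with $C$ depending only on $\Gamma$. Fix a finite presentation $\Gamma=\langle S\mid R\rangle$ and let $L$ be the maximal length of a relator in $R$. The Reidemeister--Schreier procedure, applied with a Schreier transversal of $M_i$ in $\Gamma$, gives a finite presentation of $M_i$ with $\lvert R\rvert\cdot[\Gamma:M_i]$ defining relators, each of length at most $L$ (the rewriting process does not lengthen words). If $A_i$ denotes the abelianized relation matrix of this presentation, then $H_1(M_i,\Z)$ is the quotient of a free abelian group of finite rank by the span of the rows of $A_i$, the matrix $A_i$ has at most $\lvert R\rvert\cdot[\Gamma:M_i]$ rows, and each row has $\ell^1$-norm, hence $\ell^2$-norm, at most $L$. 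By Smith normal form theory $\lvert T_i\rvert$ equals the greatest common divisor of the $\rho_i\times\rho_i$ minors of $A_i$, where $\rho_i=\rk A_i$, so it divides the determinant of some invertible $\rho_i\times\rho_i$ submatrix of $A_i$; by Hadamard's inequality that determinant is at most $L^{\rho_i}\leq L^{\lvert R\rvert\cdot[\Gamma:M_i]}$. Hence $\log\lvert T_i\rvert\leq\lvert R\rvert\log(L)\cdot[\Gamma:M_i]$.

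Plugging this into the last display and taking $\limsup$: the error term is $\leq\lvert R\rvert\log(L)/\log p_i\to 0$ because $p_i\to\infty$, while $b_1(M_i,\Q)/[\Gamma:M_i]$ converges to $\lim_i b_1(\pi^{-1}(N_i),\Q)/[G:N_i]$ by Lemma~\ref{lem:finite-cover-l2}; therefore $\limsup_i b_1(M_i,\F_{p_i})/[\Gamma:M_i]$ is bounded by that limit. Combined with $b_1(N_i,\F_{p_i})\leq b_1(M_i,\F_{p_i})$ this gives the desired inequality for the fixed cover, and taking the infimum over all finitely presented covers and applying Lemma~\ref{lem:finite-cover-l2} once more yields $\limsup_i b_1(N_i,\F_{p_i})/[G:N_i]\leq b_1^{(2)}(G)$. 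The only delicate step, and the single place where $p_i\to\infty$ is used, is the torsion estimate: bounding $\dim_{\F_{p_i}}(T_i\otimes\F_{p_i})$ merely by the number of relators of $M_i$ would leave an additive error of order $[\Gamma:M_i]$ that does not disappear after dividing, whereas the \emph{bounded} length of the Reidemeister--Schreier relators lets Hadamard's inequality produce $\log\lvert T_i\rvert=O([\Gamma:M_i])$, so that $\dim_{\F_{p_i}}(T_i\otimes\F_{p_i})/[\Gamma:M_i]\to 0$.
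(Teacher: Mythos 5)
Your proof is correct and follows essentially the same route as the paper: reduce to Lemma~\ref{lem:finite-cover-l2} via a finitely presented cover, split $b_1(M_i,\F_{p_i})$ into the free rank plus $\dim_{\F_{p_i}}(T_i\otimes\F_{p_i})$, and kill the torsion term using a bound $\log\lvert T_i\rvert = O([\Gamma:M_i])$ together with $p_i\to\infty$. The only difference is that where the paper cites \cite[Lemma 6]{KKN17} for the estimate $\lvert T_i\rvert\leq c^{[\Gamma:M_i]}$, you supply its standard proof (Reidemeister--Schreier relators of bounded length plus Hadamard's inequality), which is a correct and self-contained substitute.
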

\begin{proof}
Let $\pi\colon \Gamma \to G$ be a finitely presented cover of $G$. We write $M_i = \pi^{-1}(N_i)$. Then $b_1(N_i,\F_{p_i})\leq b_1(M_i,\F_{p_i})$.
Let $H_1(M_i,\Z) = \Z^{r_i} \oplus T_i$ where $T_i$ is a finite abelian group and $r_i = b_1(M_i,\Q)$ is the rank of the free part.
We write $d_i =  \dim_{\F_{p_i}} T_i \otimes_\Z \F_{p_i}$,
then $b_1(M_i,\F_{p_i}) = r_i + d_i$. 
Since $\Gamma$ is finitely presented, there is a constant $c> 1$ such that $p_i^{d_i} \leq |T_i| \leq c^{[\Gamma:M_i]}$ (this follows for instance from \cite[Lemma 6]{KKN17}). Since $[\Gamma:M_i]=[G:N_i]$ we
deduce
\[
	\frac{d_i}{[G:N_i]} \leq \frac{\log(c)}{\log{p_i}}
\]
and as $p_i$ tends to infinity with $i$, we have
\[
	\limsup_{i \to \infty} \frac{ b_1(N_i,\F_{p_i})}{[G:N_i]} \leq \lim_{i \to \infty} \frac{ b_1(M_i,\F_{p_i})}{[G:N_i]} = \lim_{i\to \infty} \frac{ r_i }{[G:N_i]} = \lim_{i\to \infty} \frac{ b_1(M_i,\Q)}{[G:N_i]}.
\]
Now the assertion follows from Lemma \ref{lem:finite-cover-l2}.
\end{proof}

\section{Semidirect products, normal subgroup growth and representations}
In this section, we discuss the main technical tools for our construction. To motivate the steps taken in this section, we record a fundamental observation.  Let $\prof{G}$ be a topologically finitely generated $\Pi$-graded profinite group and define $G_i = \prof{G}/\prof{G}^{\Pi(i)}$. If the primes in $\Pi$ are pairwise distinct, the theorem of Schur-Zassenhaus implies that $G_{i}$ is  a semidirect product of $G_{i-1}$ acting on an elementary abelian $p_{i}$-group. As a first step, we need to understand the involved semidirect products.

\subsection{Preliminaries on semidirect products}
We need the following elementary result on normal subgroups of semidirect products.
\begin{proposition}\label{prop:normal-subgroups}
Let $G$ be a finite group and let $p$ be a prime not dividing $|G|$. Let $V$ be an $\F_p[G]$-module and let $H = V \rtimes G$. The normal subgroups of $H$ are exactly the subgroups of the form
\[
	W \rtimes N
\]
where $N \trianglelefteq G$ is a normal subgroup of $G$ and $W \subseteq V$ is an $\F_p[G]$-submodule 
such that $N$ acts trivially on $V/W$.
\end{proposition}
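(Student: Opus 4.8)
The plan is to verify the two directions separately, using crucially that $\gcd(p,|G|) = 1$ so that $V$ is a semisimple $\F_p[G]$-module and, more importantly, that inside $H = V \rtimes G$ the subgroup $V$ is characteristic (it is the unique Sylow $p$-subgroup, being normal of order coprime to the index) and every complement to $V$ is $G$-conjugate to the chosen one by Schur--Zassenhaus. First I would check the easy direction: if $N \trianglelefteq G$ and $W \subseteq V$ is a submodule with $N$ acting trivially on $V/W$, then $W \rtimes N$ is a subgroup of $H$ (using that $W$ is $N$-invariant since it is even $G$-invariant), and it is normal. Normality is a short computation: conjugating $(w,n)$ by $(v,g)$ lands in $W \rtimes N$ precisely because $gng^{-1} \in N$, because $W$ is $G$-invariant, and because the extra additive term arising from the $V$-part of the conjugating element is $v - (gng^{-1})\cdot v$, which lies in $W$ exactly by the hypothesis that $N$ — hence its conjugate $gNg^{-1}=N$ — acts trivially on $V/W$.

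For the converse, let $K \trianglelefteq H$. Set $W = K \cap V$; since $V \trianglelefteq H$, $W$ is normal in $H$, and being contained in the abelian group $V$ it is exactly an $\F_p[G]$-submodule (conjugation by $G$ on $V$ is the module action). Let $N$ be the image of $K$ under the projection $H \to G$; this is a normal subgroup of $G$. The first claim is that $K = W \rtimes N$: the inclusion $W \rtimes N \subseteq \langle K, \text{complements}\rangle$ needs care, so instead I argue that for each $n \in N$ there is $w_n \in V$ with $(w_n, n) \in K$, and I want to show $w_n$ can be taken in $W$, i.e. $(0,n) \in K$. This is where semisimplicity enters: consider the $\F_p[G]$-module $V$ and use that $K \cap V = W$ together with an averaging/idempotent argument. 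Concretely, since $p \nmid |G|$, the conjugation action lets one modify the representative: replace $(w_n,n)$ by its conjugates under $V$ and average — more precisely, for $v \in V$, $(v,1)(w_n,n)(v,1)^{-1} = (v + w_n - n\cdot v, n) \in K$, so $(n\cdot v - v, 1) \in K \cap V = W$ for all $v$, i.e. $(n-1)V \subseteq W$; thus $n$ acts trivially on $V/W$, giving the trivial-action condition. To finally get $(0,n) \in K$ from $(w_n,n)\in K$, use that $W$ has a $G$-invariant complement $W'$ in $V$ (semisimplicity) and that $V/W \cong W'$ carries the trivial $N$-action; one shows the "cocycle" $n \mapsto w_n \bmod W$ is actually a homomorphism $N \to W'$, and being a homomorphism from a finite group into an $\F_p$-vector space with $p \nmid |N|$, it must be trivial, so $w_n \in W$ and hence $(0,n) \in K$ after correcting by an element of $W \subseteq K$.

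The main obstacle is the converse direction — specifically, showing that $K$ splits as $W \rtimes N$ with $W = K\cap V$ and $N = \operatorname{pr}(K)$, rather than being some "skew" subgroup sitting diagonally. The coprimality hypothesis is doing all the work here: it guarantees (i) semisimplicity of $V$ so that $W$ has a $G$-stable complement, and (ii) that the relevant cohomology/cocycle obstruction $H^1(N, V/W)$ vanishes, forcing the section $n \mapsto w_n$ to be trivializable. Once that splitting is established, verifying that the trivial-action condition is both necessary (done above via $(n-1)V \subseteq W$) and sufficient (the easy direction) closes the proof. I would present the cocycle computation cleanly by noting $(w_n,n)(w_m,m) = (w_n + n\cdot w_m, nm)$, so modulo $W$ the map $n \mapsto \bar w_n$ satisfies $\bar w_{nm} = \bar w_n + \bar w_m$ (since $n$ acts trivially on $V/W$), i.e. it is a homomorphism, necessarily zero.
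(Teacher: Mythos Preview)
Your proof is correct and follows the same outline as the paper: same conjugation computation for the easy direction, same definitions $W = K\cap V$ and $N = \mathrm{pr}(K)$, and the same commutator computation $(v,1)(w_n,n)(v,1)^{-1}$ to obtain $(n-1)V \subseteq W$. The one substantive difference is the final splitting step. Where you set up $n \mapsto \bar w_n \in V/W$ as a cocycle, verify it is a homomorphism (using the already-established trivial $N$-action on $V/W$), and then kill it via $\mathrm{Hom}(N,V/W)=0$ since $p \nmid |N|$, the paper does the same thing more directly: it raises a single lift $(u,n)\in K$ to the power $k=\mathrm{ord}(n)$, obtaining $(u+nu+\cdots+n^{k-1}u,1)\in K\cap V = W$, notes this sum is $ku$ modulo $W$, and concludes $u\in W$ from $p\nmid k$. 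This is exactly your homomorphism argument evaluated at the relation $n^k=1$, so the two are the same idea; the paper's version just avoids checking well-definedness and the homomorphism law. One remark: your invocation of semisimplicity and a $G$-stable complement $W'$ is unnecessary scaffolding --- your argument only ever uses the quotient $V/W$, never $W'$ itself, and the paper's proof makes no use of semisimplicity at this point either.
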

\begin{proof}
Let $W$ and $N$ be as in the proposition. We check that $W \rtimes N$ is a normal subgroup. Let $n\in N$, $w\in W$, $g\in G$, $v \in V$ be given. Then 
\[
	(v,g)(w,n)(-g^{-1}v, g^{-1}) = (v + gw -gng^{-1}v, gng^{-1}).
\]
Since $N$ is normal in $G$, the conjugate $gng^{-1}$ lies in $N$.
Moreover, $N$ acts trivially on $V/W$ and we conclude that $gng^{-1}v - v \in W$, i.e. $v + gw -gng^{-1}v$ lies in $W$. This proves that $W \rtimes N$ is normal.

Conversely, let $M \trianglelefteq H$ be a normal subgroup. We define $W = M \cap V$; this is a normal subgroup of $H$ contained in $V$, i.e., an $\F_p[G]$-submodule. Let $q \colon H \to G$ denote the projection. We define $N = q(M)$;  this is a normal subgroup of $G$ as $q$ is onto.

We show that $N$ acts trivially on $V/W$. Let $n \in N$ and let $v \in V$. Then there is an element of the form $(u,n) \in M$. The commutator of $(-v,1)$ and $(u,n)$ lies in $M$ and is
 \[
         (-v,1)(u,n)(v,1)(-n^{-1}u,n^{-1}) = (-v + nv, 1);
\]
this means, $nv-v$ lies in $W$ and thus $n$ acts trivially on $V/W$.

Finally, we check that $M = W \rtimes N$. It is sufficient to show that $\{1\}\rtimes N$ lies in $M$.
Let $n \in N$ and pick $(u,n) \in M$. If $k$ denotes the order of $n$, then
\[
	(u,n)^k = (u+nu+ \dots+n^{k-1}u, 1)
\]
In other words, $u+nu+\dots+n^{k-1}u$ lies in $W$. As $n$ acts trivially on $V/W$, we deduce that $ku \in W$. By assumption $p \nmid k$ and hence $u \in W$. Consequently, $(1,n)$ lies in $M$.
\end{proof}
This allows us to deduce:
\begin{lemma}\label{lem:size}
Let $H = V \rtimes G$ where
$G$ is a finite group and let $V$ be an $\F_p[G]$-module with $p\nmid |G|$.
If $W \rtimes N \trianglelefteq H$ is a normal subgroup, then
\[
	\dim_{\F_p} W \geq \dim_{\F_p} V - \dim_{\F_p} V^N.
\]
\end{lemma}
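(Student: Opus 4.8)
The plan is to use the structure result of Proposition~\ref{prop:normal-subgroups}: since $W \rtimes N \trianglelefteq H$, the module $W$ is an $\F_p[G]$-submodule of $V$ on which the quotient $V/W$ carries a trivial $N$-action. The key point is therefore to bound the dimension of a $G$-submodule $W$ whose quotient has trivial $N$-action from below, purely in terms of the fixed-point space $V^N$.

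First I would consider the restriction of $V$ to the subgroup $N$. Since $N$ acts trivially on $V/W$, the image of the augmentation-type operator — more concretely, the span of all elements $nv - v$ for $n \in N$, $v \in V$, i.e. the submodule $I_N V$ generated by $(n-1)v$ — must be contained in $W$. Indeed, $W$ contains every element $nv-v$ because its image in $V/W$ is zero, and $W$ is a subgroup, so $I_N V \subseteq W$. Hence $\dim_{\F_p} W \geq \dim_{\F_p} I_N V = \dim_{\F_p} V - \dim_{\F_p} (V / I_N V)$, and the quotient $V/I_N V$ is exactly the space of coinvariants $V_N$.

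The final step is to identify $\dim_{\F_p} V_N$ with $\dim_{\F_p} V^N$. Because $p \nmid |G|$, we certainly have $p \nmid |N|$, so $\F_p[N]$ is semisimple (Maschke's theorem) and $V$ decomposes as an $\F_p[N]$-module into $V = V^N \oplus U$ where $U$ is an $N$-complement with $U^N = 0$; on $U$ the averaging/transfer argument shows $I_N U = U$, while $I_N V^N = 0$. Therefore $I_N V = U$ and $V_N = V/I_N V \cong V^N$, giving $\dim_{\F_p} V_N = \dim_{\F_p} V^N$. Combining this with the previous inequality yields $\dim_{\F_p} W \geq \dim_{\F_p} V - \dim_{\F_p} V^N$, as claimed.

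I do not expect any real obstacle here; the only point requiring a little care is the coprimality input, which is exactly what makes $\F_p[N]$ semisimple and forces coinvariants and invariants to agree. Without $p \nmid |G|$ this identification can fail, but that hypothesis is available. The argument uses only Proposition~\ref{prop:normal-subgroups} to pin down the shape of $W$ and then elementary representation theory of $N$ over $\F_p$.
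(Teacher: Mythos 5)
Your proof is correct. It takes a slightly different route from the paper's: the paper uses semisimplicity of $V$ as an $\F_p[G]$-module to pick a $G$-complement $U$ of $W$ in $V$, and then observes that $U \cong V/W$ carries a trivial $N$-action, hence $U \subseteq V^N$, which gives the bound in one line. You instead work with the augmentation submodule $I_N V = \langle nv - v \rangle$, note that triviality of the $N$-action on $V/W$ forces $I_N V \subseteq W$, and then identify $\dim_{\F_p}(V/I_N V)$ with $\dim_{\F_p} V^N$ via Maschke for $N$. The two arguments are dual to each other and equally elementary, but yours is marginally more general: it only needs $p \nmid |N|$ rather than $p \nmid |G|$, and it only uses that $W$ is an additive subgroup containing the elements $nv - v$, not the full $\F_p[G]$-submodule structure of $W$ nor semisimplicity over $G$. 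The paper's version is shorter; yours isolates more precisely which hypotheses are doing the work. Both correctly rely on Proposition~\ref{prop:normal-subgroups} for the fact that $N$ acts trivially on $V/W$.
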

\begin{proof}
The representation of $G$ on $V$ is semisimple. We pick a complementary submodule $U \subseteq V$. Then $U \cong V/W$ and hence $U \subseteq V^N$.
\end{proof}

\subsection{Parenthesis: Normal subgroup growth}
For a topologically finitely generated profinite group $\prof{G}$, we write $a_k^{\triangleleft}(\prof{G})$ to denote the number of open normal subgroups of index $k$ and write $s_k^{\triangleleft}(\prof{G}) = \sum_{\ell=1}^k a_k^{\triangleleft}(\prof{G})$. It is known that the sequence $s_k^{\triangleleft}(\prof{G})$ can be bounded from above by $k^{C \log(k)}$ for some constant $C$.
We say that $\prof{G}$ has \emph{polynomial normal subgroup growth}, if $s_k^{\triangleleft}(\prof{G})$ (or equivalently $a_k^{\triangleleft}(\prof{G})$) grows at most polynomially in~$k$.

In \cite[Problem 5]{BSP20} the question was raised, whether groups with positive rank gradient necessarily have normal subgroup growth of type $k^{\log(k)}$. Since there are $\Pi$-graded profinite groups with positive rank gradient -- e.g. the profinite completions of the groups constructed in Theorem \ref{thm:main} -- the next result provides a negative answer.
\begin{proposition}\label{prop:normal-growth}
Let $\Pi$ be a sequence of pairwise distinct primes.
Let $\prof{G}$ be a topologically finitely generated $\Pi$-graded profinite group. Then $\prof{G}$ has polynomial normal subgroup growth.
\end{proposition}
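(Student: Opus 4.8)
The plan is to filter $\prof{G}$ by the characteristic series $\prof{G}^{\Pi(n)}$ and count open normal subgroups according to where they sit relative to this series. Set $G_n = \prof{G}/\prof{G}^{\Pi(n)}$; by Schur--Zassenhaus (the primes in $\Pi$ being pairwise distinct) each $G_n$ is an iterated semidirect product $V_n \rtimes V_{n-1} \rtimes \cdots \rtimes V_1$ with $V_j$ an elementary abelian $p_j$-group, and any open normal subgroup $\prof{K} \trianglelefteq_o \prof{G}$ contains $\prof{G}^{\Pi(n)}$ for some $n$, hence corresponds to a normal subgroup of $G_n$. So it suffices to bound, polynomially in $[\prof{G}:\prof{K}]$, the number of normal subgroups of each $G_n$ of a given index. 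Since $d := d(\prof{G}) < \infty$, each $V_j$ has $\F_{p_j}$-dimension at most $d$ (the quotient $\prof{G}^{\Pi(j-1)}/\prof{G}^{\Pi(j)}$ is generated by the images of $d$ generators), which will be the source of all the uniform bounds.

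First I would analyse a single step. Fix $n$ and let $\prof{K}$ be an open normal subgroup with $\prof{G}^{\Pi(n)} \leq \prof{K}$ but $\prof{G}^{\Pi(n-1)} \not\leq \prof{K}$, i.e.\ $\prof{K}$ is ``new'' at level $n$. Write $H = G_{n-1}$ and $V = V_n = \prof{G}^{\Pi(n-1)}/\prof{G}^{\Pi(n)}$, so $G_n = V \rtimes H$ and $p_n \nmid |H|$ (distinct primes). By Proposition~\ref{prop:normal-subgroups} the image of $\prof{K}$ in $G_n$ has the form $W \rtimes N$ with $N \trianglelefteq H$ and $W \subseteq V$ an $\F_{p_n}[H]$-submodule on which $N$ acts trivially on $V/W$. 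By Lemma~\ref{lem:size}, $\dim_{\F_{p_n}} V/W \leq \dim_{\F_{p_n}} V^N \leq \dim_{\F_{p_n}} V \leq d$, so the index contribution of this step, $[\prof{G}^{\Pi(n-1)}:\prof{K}\cap\prof{G}^{\Pi(n-1)}] = |V/W| = p_n^{\dim V/W}$, is a prime power $p_n^{a}$ with $0 \le a \le d$, and is at least $p_n$ because $\prof{K}$ is new at level $n$. The number of such submodules $W$ of a given codimension $a$ in the $d$-dimensional $\F_{p_n}$-space $V$ is at most the total number of subspaces of an $\le d$-dimensional $\F_{p_n}$-space, which is bounded by $C_d\, p_n^{\,d^2}$ for a constant $C_d$ depending only on $d$ (a crude Gaussian-binomial estimate). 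Crucially $p_n^{d^2} \le (p_n^{a})^{d^2} = [\prof{G}^{\Pi(n-1)}:\prof{K}\cap\prof{G}^{\Pi(n-1)}]^{d^2}$ since $a \ge 1$; so the number of possible ``top pieces'' $W$ at a new level is polynomially bounded in the index gained at that level, with exponent and constant depending only on $d$.

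Next I would assemble the levels. An arbitrary open normal $\prof{K}$ of index $k$ is determined by the finitely many levels $n_1 < n_2 < \cdots < n_r$ at which it is new, together with, at each such level $n_t$, the submodule $W_t$ as above; the remaining data (the group $N$ in Proposition~\ref{prop:normal-subgroups} at each level) is then forced to lie among the normal subgroups of the already-constructed finite quotient, and one checks these are determined by the $W_t$ up to a factor again bounded in terms of $d$ and the partial index --- alternatively, one argues that the whole of $\prof{K}$ is recovered from its images in the $G_{n_t}$, so no extra choices occur. Multiplying the per-level bounds, the number of open normal subgroups that are new exactly at $\{n_1,\dots,n_r\}$ is at most $\prod_{t=1}^r C_d\, p_{n_t}^{\,d^2 a_t}$ where $k = \prod_t p_{n_t}^{a_t}$ with each $a_t \ge 1$; since $r \le \log_2 k$ the product of the constants is at most $C_d^{\log_2 k} = k^{\log_2 C_d}$, and the product of prime powers is exactly $k^{d^2}$. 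Hence $a_k^{\triangleleft}(\prof{G}) \le k^{d^2 + \log_2 C_d}$, which is polynomial in $k$, as required.

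The main obstacle is the bookkeeping in the assembly step: making precise that an open normal subgroup is reconstructed from its ``new'' submodule data at each level without introducing a non-polynomial number of additional choices. The clean way is to induct on $n$, showing that the number of normal subgroups of $G_n$ of index $m$ is at most $P(m)$ for a fixed polynomial $P$ depending only on $d$: the inductive step uses the bijection of Proposition~\ref{prop:normal-subgroups} to write a normal subgroup of $G_n = V_n \rtimes G_{n-1}$ as $W \rtimes \bar{N}$, where $\bar{N}$ ranges over normal subgroups of $G_{n-1}$ (counted by the inductive hypothesis) and $W$ over the $\le C_d p_n^{d^2}$ submodules, with indices multiplying --- and then the codimension-$\ge 1$ bound on $W$ at genuinely new levels lets the $p_n^{d^2}$ be absorbed polynomially into the gained index, exactly as above. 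One must also handle the levels at which $\prof{K}$ is \emph{not} new separately (there $W = V_n$, contributing no index and only the single choice $W=V_n$), so those levels cost nothing. Everything else is elementary finite group theory and counting.
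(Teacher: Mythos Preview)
Your argument has a fatal gap at its foundation: the claim that each $V_j = \prof{G}^{\Pi(j-1)}/\prof{G}^{\Pi(j)}$ has $\F_{p_j}$-dimension at most $d = d(\prof{G})$ is false. The subgroup $\prof{G}^{\Pi(j-1)}$ is a proper open subgroup of $\prof{G}$ for $j \geq 2$, and it is \emph{not} generated by the images of the $d$ generators of $\prof{G}$; by Nielsen--Schreier its minimal number of generators can grow linearly with its index in $\prof{G}$. Concretely, for the very groups built in Theorem~\ref{thm:main} one has $\dim_{\F_{p_j}} V_j \geq (d-1)(1-\varepsilon)\,|G_{j-1}| \to \infty$. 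What \emph{is} true --- and what the paper uses --- is that $V_j$ is $d$-generated as an $\F_{p_j}[G_{j-1}]$-\emph{module}, but this says nothing about its vector-space dimension.

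Once $\dim V_j$ is unbounded, everything downstream fails: your inequality $\dim V/W \leq \dim V^N \leq \dim V \leq d$ breaks at the last step, and your subspace count ``at most $C_d\,p_n^{d^2}$ submodules'' is off by an unbounded factor, since the number of codimension-$a$ subspaces of an $m$-dimensional $\F_{p}$-space is of order $p^{a(m-a)}$. The paper repairs this with a genuinely different idea: it dualises, embeds $V_j^* \hookrightarrow \F_{p_j}[G_{j-1}]^d$ (using the module $d$-generation), and then bounds the number of submodules of bounded dimension in the regular representation via the UBERG property of prosolvable groups (from \cite{KionkeVannacci}), which controls the number of irreducible $\F_p$-representations of each dimension. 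That representation-theoretic input is precisely the missing ingredient in your approach; a purely linear-algebraic count cannot work here.
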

\begin{proof}
We define $G_i = \prof{G}/\prof{G}^{\Pi(i)}$ and we recall that by Schur-Zassenhaus $G_{i} \cong V_{i} \rtimes G_{i-1}$, where $V_{i}$ is an $\F_{p_{i}}[G_{i-1}]$-module. Assume that $\prof{G}$ is $d$-generated.
It is easy to check that $V_{i}$ is $d$-generated as an $\F_{p_{i}}[G_{i-1}]$-module.

Here is the crucial observation: $\prof{G}$ is prosolvable and thus has UBERG \cite[Cor.~6.12]{KionkeVannacci}.
This means that there is a constant $c \geq 1$ such that the number $R_n(\prof{G},\F_p)$ of irreducible representations of $\prof{G}$ over $\F_p$ of dimension at most $n$ is bounded above by $p^{cn}$ for all $n \in \N$ and all prime numbers $p$.

Put $b \defeq d+c+1$. We will show by induction on $i \in \N$ that $a_k^{\triangleleft}(G_i) \leq k^b$ for all $k\in \N$. For the base of the induction, we observe that $\F_{p_1}^d$ has at most $p_1^{rd}$ submodules of index $p_1^r$.

Now we assume that the assertion holds for $G_{i-1}$ and we consider the normal subgroups of $G_{i} = V_{i} \rtimes G_{i-1}$.
By Proposition~\ref{prop:normal-subgroups} every normal subgroup $M$ of $G_{i}$ has the form $W \rtimes N$ for a normal subgroup $N \trianglelefteq G_{i-1}$ and an $\F_{p_{i}}[G_{i-1}]$-submodule $W \subseteq V_i$. The index $k = |G_i:M|$ is $p_i^r\cdot|G_{i-1}:N|$ where $r$ is the codimension of $W$ in~$V_i$. We will show in the following that the number of submodules of index at most $p_i^r$ in $V_i$ is at most $p_i^{rb}$. This allows us to deduce
\[
	a^{\triangleleft}_{p_i^r\ell}(G_i) \leq p_i^{rb} a^{\triangleleft}_{\ell}(G_{i-1}) \leq (p_i^r \ell)^b
\]
from the induction hypothesis for all $r$ and all $\ell$ coprime to $p_i$.

The  non-degenerate pairing $V_i \times V_i^* \to \F_{p_i}$ between $V_i$ and its dual $V_i^*$ induces a bijection between the submodules of codimension $r$ in $V_i$ and the submodules of dimension $r$ in $V_i^*$. Recall that $V_i$ is $d$-generated as $\F_{p_i}[G_{i-1}]$-module and, as a consequence, the dual $V_i^*$ embeds into $\F_{p_i}[G_{i-1}]^d$.
This allows us to bound the number of submodules $W$ that have codimension at most $r$ in $V_i$ by the number $m_{r}$ of 
submodules of $\F_{p_i}[G_{i-1}]^d$ of dimension at most $r$. 

Let $\theta$ be an irreducible representation of $G_{i-1}$ over $\F_{p_i}$. Let $F := \mathrm{End}_{G_{i-1}}(\theta)$ be the endomorphism algebra. Then $F$ is a finite extension of $\F_{p_i}$; say $|F| = p_i^{e}$. It is well-known that the irreducible representation $\theta$ occurs with multiplicity $\mu(\theta) = \dim(\theta)/e$ in $\F_{p_i}[G_{i-1}]$.
Therefore we can bound the number of irreducible submodules of $\F_{p_i}[G_{i-1}]^d$ that are isomorphic to $\theta$ by
\[
|\mathrm{Hom}_{G_{i-1}}(\theta,\F_{p_i}[G_{i-1}]^d)| = |F|^{d\mu(\theta)} = p_i^{d\dim(\theta)}.
\]
In total, $\F_{p_i}[G_{i-1}]$ contains at most $I(j) \leq R_j(G_{i-1},\F_{p_i})p^{dj}\leq p_i^{(d+c)j}$ irreducible submodules of dimension $j$.

To conclude, we derive an upper bound for the number $m_r$ of all submodules of dimension at most $r$ in $\F_{p_i}[G_{i-1}]$.
For $a_1,\dots, a_r \in \N_0$ with $a_1 + 2a_2 + 3 a_3 +\dots + r a_r \leq r$ we can pick $a_j$ irreducibles of dimension $j$ to obtain a representation of dimension $a_1 + 2a_2+\dots+ra_r \leq r$; conversely, all representations of dimension at most $r$ arise in this way.
This allows us to conclude, using $\sum_{a_1+2a_2+\dots+ra_r = \ell} 1 \leq 2^{\ell-1}$ for $1\leq \ell \leq r$, that
\begin{align*}
	m_r &\leq \sum_{\ell=0}^r \sum_{a_1+2a_2+\dots+ra_r = \ell} \prod_{j=1}^r  I(j)^{a_j}\\
	&\leq \sum_{\ell=0}^r \sum_{a_1+2a_2+\dots+ra_r = \ell} \prod_{j=1}^r  (p_i^{(d+c)j})^{a_j}\\
	 &\leq p_i^{(d+c)r} \Big(\sum_{\ell=0}^r \sum_{a_1+2a_2+\dots+ra_r = \ell} 1\Big)\\
	 &\leq p_i^{(d+c)r} \Big(1 + \sum_{\ell=1}^r 2^{\ell-1}\Big)\\
	 &\leq p_i^{(d+c)r} 2^r \leq 
	 p_i^{(d+c+1)r} = p_i^{rb}.\qedhere
\end{align*}
\end{proof}

\subsection{The key step: Construction of $\F_p[G]$-modules}
Now we are able to describe the tool to construct the modules used in the iterated semidirect products.

Let $G$ be a finite group with a generating set $T = \{t_1,\dots,t_d\}$. Let $F$ denote a free group on $X=\{x_1,\dots,x_d\}$.
Let $\pi \colon F \to G$ be the presentation of $G$ defined by $x_i \mapsto t_i$. We write $R = \ker(\pi)$ for the set of relations.
Let $p$ be a prime number. We write $V_{T,p} = R/[R,R]R^p$ for the $\F_p[G]$ relation module.
It is well-known (from work of Gasch\"utz \cite{Gaschuetz54}) that if $p$ is coprime to $|G|$, then
\[
	V_{T,p} \cong \F_p[G]^{d-1} \oplus 1\!1_G
\]
where $1\!1_G$ denotes the trivial one-dimensional $\F_p[G]$-module.
In particular, for every subgroup $H \leq G$ we have
\begin{equation}\label{ineq:invariant-vectors}
	\dim_{\F_p} V_{T,p}^H = (d-1)|G:H|  + 1.
\end{equation}

\begin{theorem}\label{thm:module-existence}
Assume $p \nmid |G|$. Let $w_1,\dots,w_r \in F$ and let $H_1, \dots, H_s$ be subgroups of $G$.
Assume that 
\[
	\delta := 1 - \sum_{i=1}^r \frac{1}{\ord(\pi(w_i))(d-1)} - \sum_{j=1}^s \frac{|G|}{|H_j|\cdot |N_G(H_j)|} > 0.
\]
Then there is an $\F_p[G]$-module $V$ of dimension
\[
	\dim_{\F_p} V \geq (d-1) |G| \delta
\]
and elements $v_1, \dots, v_d \in V$ such that the  elements $\tilde{t}_i := (v_i,t_i) \in V \rtimes G$ for $i\in\{1,\dots,d\}$ generate $V \rtimes G$ and the following properties hold
\begin{enumerate}[label=(\alph*)]
\item\label{it:orders} $\ord(\tilde{\pi}(w_i)) = \ord(\pi(w_i))$ for all $i\in \{1,\dots,r\}$ where $\tilde{\pi}\colon F \to V \rtimes G$ is defined by $x_i \mapsto \tilde{t}_i$,
\item\label{it:invariants} $V^{H_j} = \{0\}$ for all $j\in \{1,\dots,s\}$,
\item\label{it:dimension} $\dim_{\F_p} V^K \leq \frac{1}{\delta |K|} \dim_{\F_p} V$ for all subgroups $K \leq G$.
\end{enumerate}
\end{theorem}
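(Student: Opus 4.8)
The plan is to build $V$ as a carefully chosen quotient of the relation module $V_{T,p}$. Start with the free presentation $\pi\colon F \to G$ on $X = \{x_1,\dots,x_d\}$ and the relation module $V_{T,p} = R/[R,R]R^p \cong \F_p[G]^{d-1}\oplus 1\!1_G$, which by construction carries a distinguished lift: the images $\bar{x}_i$ of the generators, together with the canonical surjection $\tilde{\pi}_0\colon F \to V_{T,p}\rtimes G$, $x_i \mapsto (\text{relevant coordinate}, t_i)$, reproduce $G$'s relations only up to the relation module, so $\tilde{\pi}_0$ is onto. The key point is that for \emph{any} $\F_p[G]$-submodule $W \subseteq V_{T,p}$ the quotient map $V_{T,p} \to V_{T,p}/W$ composes with $\tilde{\pi}_0$ to give a surjection onto $(V_{T,p}/W)\rtimes G$, again with the images of the $x_i$ as generators. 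So the whole problem reduces to: choose a submodule $W$ such that $V := V_{T,p}/W$ (i) has no $H_j$-invariants, (ii) does not shrink the order of any $\pi(w_i)$, (iii) has small $K$-invariant subspaces for every $K\le G$, and (iv) still has dimension at least $(d-1)|G|\delta$.

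Next I would make each obstruction into a "forbidden subspace" inside $V_{T,p}$ whose total dimension can be controlled, and then invoke a counting/pigeonhole argument to find a large $W$ (equivalently a large-codimension complement, using semisimplicity since $p \nmid |G|$) avoiding all of them. Concretely: (a) For condition \ref{it:orders}, the order of $\tilde{\pi}_0(w_i)$ drops below $\ord(\pi(w_i)) =: n_i$ precisely when a certain "coboundary-type" vector $\beta(w_i) \in V_{T,p}$ (the image of $w_i^{n_i}$, which lies in $R$) maps to $0$ in $V$, i.e.\ lands in $W$; so we must keep $\beta(w_i)$ out of $W$. Each such constraint rules out the submodules $W$ containing the cyclic submodule generated by $\beta(w_i)$; since that cyclic submodule is a quotient of $\F_p[G/\langle\pi(w_i)\rangle]$ it has $\F_p$-dimension at most $|G|/n_i$, but the more useful bound comes from restricting attention to its isotypic content — one gets a "budget" of roughly $\tfrac{|G|}{n_i(d-1)}$ worth of irreducible constituents that $W$ must avoid, matching the term $\sum \tfrac{1}{\ord(\pi(w_i))(d-1)}$ in $\delta$. (b) For condition \ref{it:invariants}, $V^{H_j} \ne 0$ would force some $H_j$-fixed vector to survive; the space $V_{T,p}^{H_j}$ has dimension $(d-1)|G:H_j| + 1$ by \eqref{ineq:invariant-vectors}, and killing all of it in a $G$-equivariant way costs a submodule whose irreducible-constituent count is governed by the $\F_p[G]$-submodule generated by $V_{T,p}^{H_j}$, which (via Frobenius reciprocity / the Gaschütz decomposition) contributes the term $\tfrac{|G|}{|H_j|\,|N_G(H_j)|}$. (c) For condition \ref{it:dimension}, this is the constraint that will be handled not by avoidance but by the \emph{size} of the surviving module: once $V$ is a sufficiently "spread out" sum of copies of $\F_p[G]$-constituents (plus controlled pieces), $\dim V^K \le \tfrac{1}{|K|}\dim V$ up to the correction factor $\tfrac{1}{\delta}$, because in $\F_p[G]^m$ one has $\dim (\F_p[G]^m)^K = m|G:K|$ exactly.

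The execution then runs as follows. First record the three lemmas above (order drops $\Leftrightarrow$ $\beta(w_i)\in W$; $V^{H_j}=0$ $\Leftrightarrow$ the submodule generated by $V_{T,p}^{H_j}$ is inside $W$; and the invariant-dimension estimate for $\F_p[G]$-modules built mostly from free summands). Then decompose $V_{T,p} = \bigoplus_\theta \theta^{m_\theta}$ into isotypic components; the "free part" $\F_p[G]^{d-1}$ contributes $\mu(\theta)(d-1)$ copies of each $\theta$, the trivial summand contributes one copy of $1\!1_G$. For each forbidden datum compute how many $\theta$-copies it is supported on, sum these counts, and observe — this is where $\delta > 0$ is used — that the total is strictly less than the available number of copies; the discrepancy is exactly $(d-1)|G|\delta$ in dimension (up to the bookkeeping with the endomorphism degrees). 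Choose $W$ to be a complement containing all the forbidden submodules and nothing more than necessary, so that $V = V_{T,p}/W$ realizes the surviving copies; by semisimplicity $V$ is a genuine $\F_p[G]$-submodule/quotient and we get both \ref{it:orders} and \ref{it:invariants} for free, the dimension bound from the count, and \ref{it:dimension} from the $\F_p[G]$-module invariant estimate applied to $V$ (the factor $\tfrac{1}{\delta}$ absorbing the small non-free pieces like the surviving $1\!1_G$).

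The main obstacle I anticipate is the precise bookkeeping in step (a)/(b): translating "$\ord$ of $\tilde{\pi}_0(w_i)$ does not drop" and "$V^{H_j} = 0$" into submodule-membership statements, and then bounding the number of irreducible constituents (weighted correctly by the endomorphism-field degrees $e_\theta$, since $\mu(\theta) = \dim\theta/e_\theta$) that those constraints consume — it is this weighting that produces the factors $\tfrac{1}{\ord(\pi(w_i))(d-1)}$ and $\tfrac{|G|}{|H_j|\,|N_G(H_j)|}$ rather than naive $\tfrac{1}{|G|}$-type bounds, and getting it to line up exactly with $\delta$ requires care. A secondary subtlety is verifying that the quotient construction preserves the generating property $\langle \tilde t_1,\dots,\tilde t_d\rangle = V\rtimes G$, which follows because surjectivity of $F \to V_{T,p}\rtimes G$ is stable under passing to quotients of the module — but one should state this cleanly, since it is the reason the whole relation-module strategy works.
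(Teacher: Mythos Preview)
Your overall strategy---take $V$ to be a quotient $V_{T,p}/W$ of the relation module by a submodule $W$ built from the obstructions---is exactly the paper's approach. However, there is a direction error in your treatment of condition~\ref{it:orders}, and the isotypic bookkeeping you propose is unnecessary.

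On~\ref{it:orders}: you write that we must ``keep $\beta(w_i)$ out of $W$,'' but the opposite is true. With $n_i = \ord(\pi(w_i))$ one has $w_i^{n_i} \in R$, and in any quotient $V = V_{T,p}/W$ the computation gives $\tilde{\pi}(w_i)^{n_i} = (\beta(w_i)+W,\, 1)$. Since $\tilde{\pi}$ lifts $\pi$, the order of $\tilde{\pi}(w_i)$ is always a multiple of $n_i$ (it never ``drops below'' $n_i$); it equals $n_i$ precisely when $\beta(w_i) \in W$, and is $p\, n_i$ otherwise. So the correct move is to \emph{include} in $W$ the cyclic submodule $U_i := \langle \beta(w_i)\rangle_{\F_p[G]}$, not to avoid it. Your later sentence about choosing $W$ to ``contain all the forbidden submodules'' suggests you eventually land on this, but the preceding discussion is inverted and, taken literally, would produce the wrong module.

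On the counting: no isotypic decomposition or pigeonhole argument is needed. The paper simply sets $W = \sum_i U_i + \sum_j W_j$ with $W_j = \langle V_{T,p}^{H_j}\rangle_{\F_p[G]}$ and bounds $\dim_{\F_p} W$ term by term. Since $\beta(w_i)$ is fixed by $\langle\pi(w_i)\rangle$, one gets $\dim U_i \le |G|/n_i$ directly; since $V_{T,p}^{H_j}$ is $N_G(H_j)$-stable and contains the common submodule $V_{T,p}^G$, one gets $\dim(\sum_j W_j) \le 1 + \sum_j (d-1)|G|^2/(|H_j|\,|N_G(H_j)|)$. Subtracting from $\dim V_{T,p} = (d-1)|G|+1$ yields $\dim V \ge (d-1)|G|\delta$ immediately. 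In particular, the factor $1/(d-1)$ in $\delta$ is not a sharper isotypic bound on $U_i$ as you suggest; it is just what appears when you divide $|G|/n_i$ by the normalization $(d-1)|G|$.

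On~\ref{it:dimension}: the paper's argument is simpler than yours. Since $W$ contains (at least) a copy of $1\!1_G$, semisimplicity gives $\dim V^K \le \dim V_{T,p}^K - 1 = (d-1)|G:K|$ by~\eqref{ineq:invariant-vectors}, and this is at most $\tfrac{1}{\delta|K|}\dim V$ by the dimension bound just established. No analysis of how ``spread out'' $V$ is among irreducibles is required.
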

\begin{proof}
Recall that $V_{T,p} = R/[R,R]R^p$ denotes the $\F_p[G]$ relation module. We note that $F/[R,R]R^p \cong V_{T,p} \rtimes G$ since $p$ is coprime to $|G|$.
We define submodules $U_1,\dots,U_r$ of $V_{T,p}$ by
\[
	U_i := \langle w_i^{\ord(\pi(w_i))}[R,R]R^p \rangle_{\F_p[G]}.
\]
We observe that $w_i^{\ord(\pi(w_i))}[R,R]R^p$ is $\pi(w_i)$-invariant and hence
the cyclic module $U_i$ satisfies
\[
	\dim_{\F_p} U_i \leq |G/\langle \pi(w_i)\rangle| = \frac{|G|}{\ord(\pi(w_i))}.
\]
Moreover, we define the submodules $W_1,\dots,W_s$ by
\[
	W_j := \langle V_{T,p}^{H_j} \rangle_{\F_p[G]}
\]
and observe using \eqref{ineq:invariant-vectors} that $\dim_{\F_p }V_{T,p}^{H_j} =  (d-1)|G:H_j| +1$. As $V_{T,p}^{H_j}$ is stable under the action of the normalizer $N_G(H_j)$ and contains the non-trivial submodule $V_{T,p}^G$ we obtain 
\begin{align*}
\dim_{\F_p} W_j \leq |G:N_G(H_j)| (\dim_{\F_p} V_{T,p}^{H_j}-1) + 1 = \frac{(d-1)|G|^2}{|H_j|\cdot |N_G(H_j)|} + 1
\end{align*}
Moreover, each of the spaces $W_j$ contains the non-trivial submodule $V_{T,p}^G$, and so
\[
	\dim_{\F_p} \Bigl(\sum_{j=1}^s W_j \Bigr) \leq 1+ \sum_{j=1}^s \frac{(d-1)|G|^2}{|H_j|\cdot |N_G(H_j)|}
\]
Define $V = V_{T,p} / (\sum_i U_i + \sum_j W_j)$. Then
\begin{align*}
	\dim_{\F_p} V &\geq (d-1)|G| - \sum_{i=1}^r \frac{|G|}{\ord(\pi(w_i))} - \sum_{j=1}^s  \frac{(d-1)|G|^2}{|H_j|\cdot |N_G(H_j)|}\\
	&= (d-1)|G| \delta.
\end{align*}
If $r=0$ and $s=0$, we define $V = V_{T,p}/1\!1_G$, i.e., we quotient by a one-dimensional 
submodule with trivial $G$-action.
Then $V$ is a quotient of $V_{T,p}$ and hence $V \rtimes G$ is a quotient of $F/[R,R]R^p$. Let $\tilde{\pi}\colon F \to V \rtimes G$ be the induced map and we define $\tilde{t}_i := \tilde{\pi}(x_i)$. The properties \ref{it:orders} and \ref{it:invariants} are satisfied by construction. 

As a last step, we verify assertion \ref{it:dimension}. Let $K$ be any subgroup of $G$.
Using \eqref{ineq:invariant-vectors} and the observation that we quotient $V_{T,p}$ at least by a one-dimensional space of $G$-invariant vectors, we obtain
\[
	\dim_{\F_p} V^K \leq \dim_{\F_p} V_{T,p}^K -1 =  (d-1)|G:K|  \leq \frac{1}{\delta |K|}\dim_{\F_p} V. \qedhere
\]
\end{proof}

\section{Hereditarily just-infinite groups}

Let $d \geq 2$ be given and let $F$ be a free group of rank $d$ with basis $X = \{x_1,\ldots,x_d\}$.
Let $\Pi =(p_i)_{i\in \N}$ be a sequence of pairwise distinct prime numbers.
We construct an inverse system 
\begin{equation}\label{eq:inverse-limit}
G_0 \stackrel{q_1}{\longleftarrow} G_1 \stackrel{q_2}{\longleftarrow} G_2 \stackrel{q_3}{\longleftarrow} \cdots
\end{equation}
of finite $d$-generated groups $(G_k)_{k \in \N_0}$ together with presentations $\pi_k \colon F \to G_k$ such that $q_k \circ \pi_{k} = \pi_{k-1}$.
The group $G_{k+1}$ will be a semidirect product of the form $G_{k+1} = V_{k+1} \rtimes G_k$ for a suitable $\F_{p_{k+1}}[G_k]$-module $V_{k+1}$ that arises from Theorem~\ref{thm:module-existence}.
We start by fixing a positive real number $\varepsilon < \frac{1}{4}$ and a function $o \colon F \to \N$ with
\begin{equation}\label{eq:sum_o_w}
\sum_{w \in F} \frac{1}{o(w)} < \frac{\varepsilon}{2}.
\end{equation}
Let $G_0$ denote the trivial group, let $V_1 = \F_{p_1}^d$, let $G_1 = V_1 \rtimes G_0$, and let $q_1 \colon G_1 \rightarrow G_0$ be the trivial map.
We fix a generating set $T_1 = \{t_1^{(1)},\dots,t_d^{(1)}\}$ of $G_1$.
Suppose now that for some $k \in \N$ we have constructed a sequence
\begin{equation}\label{eq:inverse-system-till-k}
G_0 \stackrel{q_1}{\longleftarrow} G_1 \stackrel{q_2}{\longleftarrow} \cdots \stackrel{q_{k-1}}{\longleftarrow} G_{k-1} \stackrel{q_{k}}{\longleftarrow} G_k
\end{equation}
of finite groups $G_i$ with generating sets $T_i = \{t_1^{(i)},\dots,t_d^{(i)}\}$ such that the following properties hold for each $1 \leq i \leq k$.

\begin{enumerate}
\item The group $G_i$ splits as $G_i  = V_i \rtimes G_{i-1}$, where $V_i$ is an $\F_{p_{i}}[G_{i-1}]$-module.
\item The map $q_i$ is defined as the canonical projection $G_{i}  = V_{i} \rtimes G_{i-1} \rightarrow G_{i-1}$.
\item The presentation $\pi_i \colon F \to G_i$ given by $x_j \mapsto t_j^{(i)}$ for every $1 \leq j \leq d$ satisfies $q_i \circ \pi_{i} = \pi_{i-1}$.
\item If $\ord(\pi_{i-1}(w)) > o(w)$ for some $w \in F$, then $\ord(\pi_{i}(w)) = \ord(\pi_{i-1}(w))$.\label{it:stable-order}
\item\label{it:dimension-induction} The dimension of $V_i$ is bounded below by $(d-1) |G_{i-1}| (1- \varepsilon)$.
\item\label{it:invariants-induction} For each subgroup $K \leq G_{i-1}$ we have
\[
\dim_{\F_{p_i}} V_i^K \leq \frac{1}{(1-\varepsilon) |K|} \dim_{\F_{p_i}} V_i.
\]
\item\label{it:closure-induction} If $1 \leq i \leq k-1$ and
\begin{equation}\label{eq:logpi-condition}
\log(p_i) > \max\{\varepsilon^{-1}, \log(\varepsilon)(2\varepsilon- 2^{-1} )^{-1}\},
\end{equation}
then each normal subgroup $M$ of $G_{i+1}$ with non-trivial image in $G_{i-1}$ is of the form $V_{i+1} \rtimes N$, where $N$ is normal in $G_{i}$.
\end{enumerate}
Note that these assumptions are satisfied (and mostly vacuous) for $G_1 = V_1 \rtimes G_0$.
We list $w_1,\dots,w_r$  all words in $F$ such that $\ord(\pi_k(w_i)) > o(w_i)$.
If moreover
\begin{equation}\label{eq:logpk-condition}
\log(p_k) > \max\{\varepsilon^{-1},\log(\varepsilon)(2\varepsilon- 2^{-1})^{-1}\},
\end{equation}
then we make a list $H_1,\dots, H_s$ of the normal closures of all non-trivial elements of $G_{k-1}$ in $G_{k}$.
We note that by Proposition \ref{prop:normal-subgroups} each $H_s$ has the form $H_s = W \rtimes M$ with $|M| \geq 2$.
In this case it follows from Lemma~\ref{lem:size} and~\eqref{it:invariants-induction} that
 \[
\dim_{\F_{p_k}}W \geq \Big(1-\frac{1}{2 (1-\varepsilon)}\Big) \dim_{\F_{p_k}} V_k.
 \]
Together with~\eqref{it:dimension-induction}, this implies
\begin{equation}\label{eq:sum_H_i_part_1}
\sum_{j=1}^s \frac{1}{|H_j|}
\leq \frac{|G_{k-1}|}{2 |W|}
\leq \frac{|G_{k-1}|}{2 p_k^{(1-\frac{1}{2(1-\varepsilon)}) \dim_{\F_{p_k}} V_k}}
\leq \frac{|G_{k-1}|}{2 p_k^{(\frac{1}{2} - \varepsilon) (d-1) |G_{k-1}|}}.
\end{equation}
On the other hand, we can deduce from $\log(|G_{k-1}|) < |G_{k-1}|$ and condition~\eqref{eq:logpk-condition} that
$|G_{k-1}| < p_k^{\epsilon |G_{k-1}|}$.
By combining the latter inequality with~\eqref{eq:sum_H_i_part_1} we obtain

\begin{equation}\label{eq:sum_H_i_part_2}
\sum_{j=1}^s \frac{1}{|H_j|}
\leq \frac{p_k^{\epsilon |G_{k-1}|}}{2 p_k^{(\frac{1}{2}-\varepsilon) (d-1) |G_{k-1}|}}
= \frac{1}{2} p_k^{ ( \epsilon - (\frac{1}{2}-\varepsilon) (d-1) ) |G_{k-1}|}.
\end{equation}
Next we observe that the assumption $\varepsilon \in (0,\frac{1}{4})$ implies $\epsilon < 2^{-1} -\varepsilon$.
In view of this, we see that
\begin{equation}\label{eq:epsilon-1-over-2}
(\epsilon - (2^{-1} -\varepsilon) (d-1) ) |G_{k-1}|
\leq \epsilon - (2^{-1} -\varepsilon) (d-1)
\leq 2\epsilon - 2^{-1}.
\end{equation}
Now we can insert~\eqref{eq:epsilon-1-over-2} into~\eqref{eq:sum_H_i_part_2} and apply~\eqref{eq:logpk-condition} to deduce that

\begin{equation}\label{eq:final-1}
\sum_{j=1}^s \frac{1}{|H_j|}
\leq \frac{1}{2} p_k^{2\epsilon - 2^{-1}}
< \frac{\varepsilon}{2}.
\end{equation}
Finally, we can combine~\eqref{eq:sum_o_w} and~\eqref{eq:final-1} to obtain
\begin{align*}
\delta &:= 1 - \sum_{i=1}^r \frac{1}{\ord(\pi_k(w_i))(d-1)} - \sum_{j=1}^s \frac{|G_k|}{|H_j|\cdot |N_{G_k}(H_j)|}\\
& \geq 1 - \sum_{w \in F} \frac{1}{o(w)} - \sum_{j=1}^s \frac{1}{|H_j|}\\
& > 1-\varepsilon.
\end{align*}
In this case Theorem \ref{thm:module-existence} provides us with an $\F_{p_{k+1}}[G_k]$-module $V_{k+1}$ of dimension
\[
\dim_{\F_{p_{k+1}}} V_{k+1}
\geq (d-1) |G_k| \delta
> (d-1) |G_k| (1 - \varepsilon)
\]
and elements $t_i^{(k+1)} = (v_i,t_i^{(k)}) \in G_{k+1} \defeq V_{k+1} \rtimes G_k$ for $i\in\{1,\dots,d\}$ such that $T_{k+1} \defeq \{t_1^{(k+1)},\dots,t_d^{(k+1)}\}$ generates $G_{k+1}$ and the following properties hold.
\begin{enumerate}[label=(\alph*)]
\item\label{it:orders-end} $\ord(\pi_{k+1}(w_i)) = \ord(\pi_k(w_i))$ for all $i\in \{1,\dots,r\}$, where $\pi_{k+1}\colon F \to G_{k+1}$ is defined by $x_i \mapsto t_i^{(k+1)}$,
\item\label{it:invariants-end} $V_{k+1}^{H_j} = \{0\}$ for all $j\in \{1,\dots,s\}$,
\item\label{it:dimension-end} $\dim_{\F_{p_{k+1}}} V_{k+1}^K \leq \frac{1}{(1-\varepsilon)|K|} \dim_{\F_{p_{k+1}}} V_{k+1}$ for all subgroups $K \leq G_{k}$.
\end{enumerate}
Altogether we have extended the sequence~\eqref{eq:inverse-system-till-k} to a sequence
\[
G_0 \stackrel{q_1}{\longleftarrow} G_1 \stackrel{q_2}{\longleftarrow} \cdots \stackrel{q_{k}}{\longleftarrow} G_{k} \stackrel{q_{k+1}}{\longleftarrow} G_{k+1}
\]
that satisfies the conditions (1)-(6) for $k+1$ instead of $k$.
To verify~\eqref{it:closure-induction}, we assume
\[
\log(p_k) > \max\{\varepsilon^{-1},\log(\varepsilon)(2\varepsilon- 2^{-1})^{-1}\}.
\]
Let $M$ be a normal subgroup of $G_{k+1}$ with non-trivial image $M_{k-1}$ in $G_{k-1}$.
Then we know from Proposition~\ref{prop:normal-subgroups} that the image $M_k$ of $M$ in $G_k$ contains the normal closure of some non-trivial element of $G_{k-1}$, i.e., it contains some group $H_j$ used in the construction.
In view of~\ref{it:invariants-end} we therefore have $V_{k+1}^{M_k} = \{0\}$.
In this case, it follows from another application of Proposition~\ref{prop:normal-subgroups} and Lemma~\ref{lem:size} that $M = V_{k+1} \rtimes M_k$, which completes the induction.

\smallskip

\noindent We can therefore consider the inverse limit $\prof{G}_{\infty} = \varprojlim_{k \to \infty} G_k$ of the sequence~\eqref{eq:inverse-limit}.
Let $\pi \colon F \to \prof{G}_\infty$ denote the homomorphism that is induced by the projections $\pi_i \colon F \rightarrow G_i$ and let $\Gamma = \pi(F)$.

\begin{theorem}\label{thm:main}
The group $\Gamma$ is a residually finite, $d$-generated, hereditarily just-infinite torsion group with 
\[
b_1^{(2)}(\Gamma) \geq (d-1) (1-\varepsilon).
\]
Moreover, $\prof{G}_{\infty}$ is hereditarily just-infinite as well and the inclusion $\iota \colon \Gamma \rightarrow \prof{G}_{\infty}$ extends to an isomorphism $\widehat{\iota} \colon \widehat{\prof{\Gamma}} \rightarrow \prof{G}_{\infty}$.
\end{theorem}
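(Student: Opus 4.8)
\emph{Strategy.} I will establish all the assertions for the profinite group $\prof{G}_\infty$ first and then transfer them to $\Gamma$ using Corollaries~\ref{cor:profinite-completion-torsion-pi-graded} and~\ref{cor:just-infinite}. To begin with the elementary points: $\Gamma = \pi(F)$ is generated by $\pi(x_1),\dots,\pi(x_d)$ and embeds into $\prof{G}_\infty \hookrightarrow \prod_k G_k$, so it is $d$-generated and residually finite; it surjects onto each $G_k$, and since $\dim_{\F_{p_k}} V_k \ge (d-1)(1-\varepsilon) > 0$ we have $|G_k| \to \infty$, so $\Gamma$ is infinite. For torsion, fix $w \in F$: the integers $\ord(\pi_k(w))$ form a divisibility-increasing, hence non-decreasing, sequence; by property~\eqref{it:stable-order} it becomes constant once it exceeds $o(w)$, and otherwise remains bounded by $o(w)$, so in either case it stabilizes to some $n$, and then $\pi_k(w)^n = 1$ for all $k$, i.e.\ $\pi(w)^n = 1$.

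\emph{$\Pi$-gradedness and the completion.} Write $\prof{K}_k = \ker(\prof{G}_\infty \to G_k)$; these are open normal subgroups with $\prof{K}_{k-1}/\prof{K}_k \cong V_k$ elementary abelian $p_k$ and $\bigcap_k \prof{K}_k = \{1\}$. An induction shows $\prof{G}_\infty^{\Pi(n)} \subseteq \prof{K}_n$: the surjection onto $G_1 = \F_{p_1}^d$ factors through $H_1(\prof{G}_\infty,\F_{p_1})$, which gives $n=1$; and since $D_{p_n}$ is monotone under inclusions of closed subgroups while $\prof{K}_{n-1} \to \prof{K}_{n-1}/\prof{K}_n \cong V_n$ factors through $H_1(\prof{K}_{n-1},\F_{p_n})$, we get $\prof{G}_\infty^{\Pi(n)} = D_{p_n}(\prof{G}_\infty^{\Pi(n-1)}) \subseteq D_{p_n}(\prof{K}_{n-1}) \subseteq \prof{K}_n$. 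Hence $\bigcap_n \prof{G}_\infty^{\Pi(n)} = \{1\}$, so $\prof{G}_\infty$ is $\Pi$-graded (it is topologically finitely generated since $\Gamma$ is a dense finitely generated subgroup). As a sequence of pairwise distinct primes tends to infinity, Corollary~\ref{cor:profinite-completion-torsion-pi-graded} yields the isomorphism $\widehat{\iota} \colon \widehat{\prof{\Gamma}} \to \prof{G}_\infty$, which is the last assertion of the theorem.

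\emph{$\prof{G}_\infty$ is hereditarily just-infinite.} By Lemma~\ref{lem:just-infinite-rank-grandient} it suffices to show that $\prof{G}_\infty$ is just-infinite and has positive rank gradient. The chain $(\prof{K}_k)$ is a residual chain of open normal subgroups, hence cofinal, so $\mathrm{RG}(\prof{G}_\infty) = \lim_k d(\prof{K}_k)/|G_k|$; since $\prof{K}_k$ surjects continuously onto $V_{k+1} \cong \F_{p_{k+1}}^{\dim V_{k+1}}$, property~\eqref{it:dimension-induction} gives $d(\prof{K}_k) \ge \dim_{\F_{p_{k+1}}} V_{k+1} \ge (d-1)|G_k|(1-\varepsilon)$, whence $\mathrm{RG}(\prof{G}_\infty) \ge (d-1)(1-\varepsilon) > 0$. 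For just-infiniteness, let $\prof{N} \trianglelefteq_c \prof{G}_\infty$ be nontrivial; choose $m$ so that the image of $\prof{N}$ in $G_m$ is nontrivial, and then fix $j \ge m+1$ larger than the finitely many indices for which~\eqref{eq:logpi-condition} fails (possible since $p_i \to \infty$). For every $i \ge j$ the image $M_i$ of $\prof{N}$ in $G_i$ is normal, and its image in $G_{i-1}$ is nontrivial (it surjects onto the nontrivial image in $G_m$), so property~\eqref{it:closure-induction} forces $M_{i+1} = V_{i+1} \rtimes N_i$ for some $N_i \trianglelefteq G_i$; in particular $V_{i+1} \subseteq M_{i+1}$, which translates to $\prof{K}_i \subseteq \prof{N}\prof{K}_{i+1}$. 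Since $\prof{N}$ is a subgroup, induction gives $\prof{K}_j \subseteq \prof{N}\prof{K}_i$ for all $i > j$, and as the $\prof{K}_i$ form a neighbourhood basis of $1$ and $\prof{N}$ is closed, intersecting over $i$ yields $\prof{K}_j \subseteq \prof{N}$, so $\prof{N}$ is open. I expect this step -- propagating the nontriviality of $\prof{N}$ down the tower to invoke property~\eqref{it:closure-induction} at every large level, and then running the profinite closure argument -- to be the main obstacle; the remaining ingredients are direct applications of the results of Sections~2 and~3.

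\emph{Transfer to $\Gamma$ and the $\ell^2$-Betti number.} Since $\Gamma$ is a dense torsion subgroup of the hereditarily just-infinite $\Pi$-graded profinite group $\prof{G}_\infty$, Corollary~\ref{cor:just-infinite} shows that $\Gamma$ is hereditarily just-infinite. For the $\ell^2$-bound, set $N_i = \Gamma \cap \prof{K}_i = \ker(\Gamma \to G_i)$; this is a residual chain of $\Gamma$ with $[\Gamma:N_i] = |G_i|$, and since $N_i$ is dense in $\prof{K}_i$ the map $N_i \to \prof{K}_i/\prof{K}_{i+1}$ is surjective with kernel $N_{i+1}$, so $N_i/N_{i+1} \cong V_{i+1}$ is elementary abelian $p_{i+1}$ of rank $\dim_{\F_{p_{i+1}}} V_{i+1}$. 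Consequently $H_1(N_i,\F_{p_{i+1}})$ surjects onto $N_i/N_{i+1}$, so $b_1(N_i,\F_{p_{i+1}}) \ge \dim_{\F_{p_{i+1}}} V_{i+1} \ge (d-1)|G_i|(1-\varepsilon)$. Applying Proposition~\ref{prop:lowerboundL2} to the residual chain $(N_i)$ and the primes $p_2,p_3,\dots$ (which tend to infinity) gives $b_1^{(2)}(\Gamma) \ge \limsup_i b_1(N_i,\F_{p_{i+1}})/[\Gamma:N_i] \ge (d-1)(1-\varepsilon)$, completing the proof.
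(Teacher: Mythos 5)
Your proof is correct and follows essentially the same route as the paper: torsion via property~(4), the isomorphism $\widehat{\iota}$ via Corollary~\ref{cor:profinite-completion-torsion-pi-graded}, the $\ell^2$-bound via Proposition~\ref{prop:lowerboundL2} applied to the kernels $N_i$ mapping onto $V_{i+1}$, and hereditary just-infiniteness via Lemma~\ref{lem:just-infinite-rank-grandient} together with property~(7). You merely spell out two steps the paper leaves implicit -- the verification that $\prof{G}_\infty$ is $\Pi$-graded and the closure argument $\prof{K}_j \subseteq \bigcap_i \prof{N}\prof{K}_i = \prof{N}$ -- both of which are carried out correctly.
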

\begin{proof}
The group $\Gamma$ is a subgroup of a profinite (hence residually finite) group and is therefore residually finite. By construction $\Gamma$ is dense in $\prof{G}_\infty$.
Next we show that $\Gamma$ is a torsion group.
To this end, let $w \in F$ be arbitrary.
If $\ord(\pi_k(w)) > o(w)$ for some $k$, then it follows from~\eqref{it:stable-order} that $\ord(\pi_\ell(w)) = \ord(\pi_{k}(w))$ for all $\ell \geq k$ and hence $\ord(\pi(w)) = \ord(\pi_k(w))$.
On the other hand, if $\ord(\pi_k(w))$ is bounded above by $o(w)$ for every $k$, then the order of $\pi(w)$ is bounded above by $o(w)$ and is finite.
From Corollary~\ref{cor:profinite-completion-torsion-pi-graded} we can therefore deduce that $\widehat{\iota}$ is an isomorphism.
To verify the lower bound on $b_1^{(2)}(\Gamma)$, we observe that the kernel $N_k$ of the projection $\Gamma \to G_k$ maps onto $V_{k+1}$.
As a consequence, it follows from~\eqref{it:dimension-induction} that
\[
b_1(N_k,\F_{p_{k+1}})
\geq \dim_{\F_{p_{k+1}}} V_{k+1}
\geq (d-1)(1-\varepsilon)|G_k|.
\]
In other words, we have
\[
\frac{b_1(N_k,\F_{p_{k+1}})}{[G \colon N_k]}
\geq (d-1)(1-\varepsilon).
\]
In this case Proposition~\ref{prop:lowerboundL2} tells us that $b_1^{(2)}(\Gamma) \geq (d-1) (1-\varepsilon)$.
It remains to show that $\Gamma$ is hereditarily just-infinite. By Corollary \ref{cor:just-infinite} it is sufficient to show that $\prof{G}_\infty$ is hereditarily just-infinite.
Since $\prof{G}_\infty$ has rank gradient larger than $(1-\varepsilon)(d-1)$, it is sufficient to show that $\prof{G}_\infty$ is just-infinite by Lemma~\ref{lem:just-infinite-rank-grandient}.
Let therefore $M$ be a non-trivial closed normal subgroup of $\prof{G}_\infty$.
We need to show that $M$ is open.
Let $M_i := \pi_i(M)$ for each $i$.
We observe that~\eqref{eq:logpi-condition} is satisfied for each sufficiently large number $i$.
In particular, if $i$ is such a large number and $M_{i}$ is non-trivial, then~\eqref{it:closure-induction} tells us that $M_{i+2} = V_{i+2} \rtimes M_{i+1}$.
Since this holds for all sufficiently large $i$, it follows that $M$ is open.
\end{proof}

\begin{remark}\label{rem:flexible-finite-image}
(a) The construction of $\Gamma$ and $\textbf{G}_{\infty}$ can be generalized by replacing the trivial group $G_0$ with an arbitrary finite group.
To this end, one only has to add the condition that the primes in the sequence $\Pi =(p_i)_{i\in \N}$ do not divide the order of $G_0$.
In this case, the heredirarily just-infinite group $\Gamma$ in Theorem~\ref{thm:main} has the additional property that it maps onto $G_0$.

(b) If one is only interested in applying our construction in order to produce new finitely generated, infinite torsion groups, then the construction can be significantly shortened.
Indeed, in this case the only steps that remain necessary are Proposition~\ref{prop:normal-subgroups}, part $(a)$ of Theorem~\ref{thm:module-existence}, and a simplified variant of the induction in the current section.
\end{remark}

\bibliographystyle{amsplain}
\bibliography{literatur}

\end{document}